\newtheorem{lemma}{Lemma}
\newtheorem{theorem}{Theorem}
\numberwithin{equation}{section}
\journal{Applied Numerical Mathematics}
\begin{document}
\nocite{*}

\begin{frontmatter}
\title{Mean square asymptotic stability characterisation of perturbed linear stochastic functional differential equations}

\author{John A. D. Appleby$^a$, Emmet Lawless$^a$}

\affiliation{School of Mathematical Sciences, Dublin City University.}

\ead{john.appeby@dcu.ie, emmet.lawless6@mail.dcu.ie}

\begin{abstract}
In this paper we investigate the mean square asymptotic stability of a perturbed scalar linear stochastic functional differential equation. Specifically, we are able to give necessary and sufficient conditions on the forcing terms for convergence of the mean square, exponential convergence of the mean square, and integrability of the mean square of solutions. It is also essential that the underlying unperturbed SFDE is mean--square asymptotically stable for these results to hold.\\
\end{abstract}
\begin{keyword}
Perturbed stochastic functional differential equation \sep Mean-square asymptotic stability \sep Mean square exponential asymptotic stability \sep asymptotic behaviour, stochastic functional differential equation \sep Volterra equations.
\MSC[2010] Primary 60H20, 60H10, 34K50, 34K20, 34K27.
\end{keyword}

\end{frontmatter}

\section{Introduction}

Over the last three decades, a substantial literature has been developed concerning the asymptotic behaviour and stability properties of stochastic functional differential equations (SFDEs). Important monographs by Kolmanovskii and Myshkis~\cite{KolMys}, Mao~\cite{Mao} and Shaikhet~\cite{Shaikhet2013} have appeared. A variety of stability types can be considered, but from the outset of stability studies in stochastic functional differential (or evolution) equations the asymptotic behaviour, and specifically the convergence in the mean square, has attracted a great deal of attention (see for instance the Haussmann~\cite{Hauss:78}, Ichikawa~\cite{Ichi:82}, Mao~\cite{Mao:94}, Mizel and Trutzer~\cite{MizTrut}, Mohammed~\cite{Moh:84}, as well as in~\cite{KolMys,Mao,Shaikhet2013} where  substantial bibliographies can be found). Moreover, since linear equations are so fundamental to mathematical analysis of hereditary systems, it is appropriate that they receive due study. 

The simplest class of equations that one might study are therefore (unforced) linear autonomous SFDEs. A characterisation of the mean--square behaviour of general scalar linear SFDEs (with finite memory) was produced in Appleby, Riedle and Mao~\cite{AMR}. In particular if $B$ is a standard Brownian motion, for the scalar equation 
\[
dU(t)=\int_{[-\tau,0]}U(t+s)\nu(ds)\,dt + \int_{[-\tau,0]} U(t+s)\mu(ds)\,dB(t),
\]
with continuous and deterministic initial function $\psi$, where $\mu$ and $\nu$ are finite measures on $[-\tau,0]$, it has been shown that the global mean square asymptotic stability of the zero solution of this equation is equivalent to 
\[
r(t)\to 0 \text{ as $t\to\infty$}, \quad \int_0^\infty \left(\int_{[-\tau,0]} r(t+s)\mu(ds)\right)^2\,dt <  1,
\] 
where $r$ is the differential resolvent of the underlying deterministic differential equation i.e.,
\[
r'(t)=\int_{[-\tau,0]}r(t+s)\nu(ds), \quad t>0; \quad r(0)=1, \quad r(t)=0 \quad\text{$t\in [-\tau,0)$}.
\]
Indeed, if the mean square convergence to zero occurs, it must do so exponentially fast. There is a very extensive literature on exponential mean square stability  for SFDEs with finite delay,  with great advances being made by Mao and co--workers: some representative and foundational works from this school include \cite{Mao:1996,Mao:2001,MaoLiao:1996,MaoShah:1997} and the themes of this work and more recent advances is reflected in the monograph~\cite{Mao}. Other papers which seek to give a characterisation of the mean square asymptotic stability of solutions of unforced SFDEs include \cite{bucknot2013}, \cite{MacNech} and \cite{app21}. A corresponding characterisation for linear stochastic difference equations is presented by the authors in \cite{AL23}.

It is very natural to then ask: if the (exponentially stable) equation is perturbed by external forces independent of the state which are, in a certain sense, asymptotically small, what are the minimal conditions on these forcing terms that preserve the stability (in various ways)? Such forcing terms are sometimes called fading perturbations or damped noise. Concretely, if we put deterministic and continuous functions $f$ and $g$ in the drift and diffusion terms, to get the forced equation
\[
dX(t)=\left(f(t)+\int_{[-\tau,0]}X(t+s)\nu(ds)\right)\,dt + \left(g(t)+ \int_{[-\tau,0]} X(t+s)\mu(ds)\right)\,dB(t),
	\]
	under what conditions does the mean square still tend to zero, or do so exponentially fast, or do so in an integrable sense i.e.,
	\[
	\int_0^\infty \mathbb{E}[X^2(t,\psi)]\,dt <+\infty?
	\]
Our results in this paper (for the scalar equation at least) are able to characterise exactly the conditions which give rise to the various types of mean square convergence. In particular, we are able to show that $\mathbb{E}[X^2(t,\psi)]\to 0$ as $t\to\infty$ for every continuous initial condition $\psi$ with finite second moments is \textit{equivalent to} the mean square convergence of $U$ to zero (i.e. the stochastic equation without any forcing), alongside the conditions 
\[
\int_t^{t+\delta} f(s)\,ds\to 0, \quad\text{$t\to\infty$ for each $\delta\in (0,1]$}, 
\quad \int_t^{t+1} g^2(s)\,ds \to 0 \quad\text{$t\to\infty$}.
\] 
If exponential convergence of $\mathbb{E}[X^2(t)]$ to zero is desired as $t\to\infty$, this is equivalent to the mean square convergence of $U$ to zero, alongside the following exponential decay conditions on $f$ and $g$: 
\[
\text{There is $C>0$, $\beta>0$ such that }
\left|\int_0^t e^{\beta s}f(s)\,ds\right|\leq C, \quad t\geq 0,\quad \int_0^\infty e^{2\beta s}g^2(s)\,ds < +\infty.
\]
Finally, if we want mean square integrability, as characterised above, this is equivalent to the mean square convergence of $U$ to zero, alongside the following square integrability conditions fulfilled by $f$ and $g$:
\[t\mapsto 
\int_0^t e^{-(t-s)}f(s)\,ds \in L^2(\mathbb{R}_+), \quad g\in L^2(\mathbb{R}_+).
\] 
In the opinion of the authors, these results constitute a solid advance in the theory, since \textit{coincident necessary and sufficient conditions on $f$ and $g$ are imposed which guarantee the appropriate type of convergence}. Until now, sufficient condition results abound, but such an exact characterisation has not been achieved. It is to be noted that these conditions do not place pointwise bounds on $f$ and $g$, but are rather conditions on certain types of averages of $f$ and $g$. Such conditions allow for relatively ill--behaved forcing functions on a pointwise basis, provided their ``average'' behaviour is good. On the other hand, the result has the character of deterministic perturbation theorems: if the underlying unperturbed equation (in this case $U$) has asymptotically stable solutions, then so does the forced equation, contingent on the forcing function fading sufficiently rapidly. Other results achieve something of this goal, but here we are able to exactly characterise the underlying stability condition, as well as the precise conditions on the forcing term which enable the result to hold. The exponential integral properties noted above have occurred already in the literature for \textit{affine} stochastic Volterra integrodifferential equations (see e.g.~\cite{AppFree:03, Mao:2000}), in which there is no state dependence in the diffusion term. For deterministic Volterra equations, an application of \cite[Thm 11.4.3]{GLS} in the linear case shows that the condition 
\begin{equation} \label{eq.fto0ave}
\int_t^{t+\delta} f(s)\,ds \to 0 \text{ as $t\to\infty$ for all $\delta\in (0,1]$},
\end{equation}
is sufficient for the perturbed equation 
\[
x'(t) =\int_{[0,t]}\mu(ds)x(t-s) + f(t), \quad 
\] 
to obey $x(t)\to 0$ as $t\to\infty$, provided the underlying differential resolvent $r$, given by 
\[
r'(t)=\int_{[0,t]}\mu(ds)r(t-s), \quad t>0; \quad r(0)=1,
\]
is in $L^1(\mathbb{R}_+)$. The result underlying \eqref{eq.fto0ave} appears as \cite[Lemma 15.9.2]{GLS}, and relies on an elegant decomposition of $f$ into components which depend solely on the ``sectional average'' in \eqref{eq.fto0ave}. Specifically, extending $f$ to be zero on $(-\infty,0)$ and writing 
\[
f_\delta(t)=\int_{t-\delta}^t f(s)\,ds, \quad t\geq 0
\]
we have for any $\delta\in (0,1)$ that 
\[
I(t):=f(t)-f_1(t), \quad t\geq 0, 
\]
obeys 
\[
\int_0^t I(s)\,ds = \int_0^1f_\delta(t)\,d\delta, \quad t\geq 1.
\]
This identity enables conditions to be imposed on the more regular functions $f_\delta$ than on $f$ directly, and, in the case of this work at least, lead to sharp characterisations of stability. Moreover, in the opinion of the authors, this decomposition lemma can make further contributions to characterise the asymptotic behaviour in deterministic and stochastic equations with memory, and we hope to explore this in further works.

The condition \eqref{eq.fto0ave} goes back further than the late 1980's, however. To the best of our knowledge, one of the earliest uses of this condition in the stability theory of asymptotically autonomous deterministic differential equations is in papers of Strauss and Yorke~\cite{SY67a,SY67b} in the late 1960's. However, as far as we know, in the present work we see the first use in stochastic equations in characterising asymptotic stability. On the other hand, the importance of the average  $\int_t^{t+1}g^2(s)\,ds$ in the diffusion term of SDEs was first pointed out in Chan and Williams~\cite{CW89}, with results which eliminate monotonicity in $g$ being presented in Appleby, Cheng and Rodkina~\cite{ACR11}.

The paper is organised as follows: Section 2 gives a precise formulation of the problem, together with some background theory. Section 3 deduces linear Volterra integral equations for the mean square of $X$, as well as some auxiliary functions (which are mean--squares of other processes). Section 4 states the main results and discusses hypotheses. Section 5, which concludes the paper, gives the proofs of the main results.  
  
\section{Mathematical Preliminaries}
For the following general results on SFDEs and stochastic analysis, the reader may refer to the monographs~\cite{Mao} and~\cite{KolMys}. 
Let us fix a complete probability space $(\Omega,\mathcal{F},\mathbb{P})$ with a
filtration $(\mathcal{F}(t))_{t\ge 0}$ satisfying the usual conditions and
let $B=\{B(t):\,t\geq 0\}$ be a one--dimensional Brownian motion on this space. We denote by $\mathbb{E}$ the expectation induced from $\mathbb{P}$: that is to say, for any $\mathcal{F}$--measurable $X$ (i.e. a mapping  $X:\Omega\to\mathbb{R}$ such that $\{X\leq x\}\in\mathcal{F}$ for all $x\in \mathbb{R}$) for which 
\[
\int_{\Omega} |X(\omega)|\,d\mathbb{P}[\{\omega\}]<+\infty,
\]
we have 
\[
\mathbb{E}[X] = \int_{\Omega} X(\omega)\,d\mathbb{P}[\{\omega\}]. 
\]

Let $\tau>0$. A process $X=\{X(t):t\geq 0\}$ is said to be adapted to  $(\mathcal{F}(t))_{t\ge 0}$ if 
$X(t)$ is $\mathcal{F}(t)$--measurable for each $t\geq 0$. 
Let $\psi$ be a $C([-\tau,0];\mathbb{R})$--valued $\mathcal{F}(0)$--measurable random variable with  
\[
\|\psi\|^2:=
\mathbb{E}\left[  \sup_{t\in [-\tau,0]} \psi^2(t)\right] < +\infty,
\]
recalling that $C([-\tau,0];\mathbb{R})$ is the space of continuous functions $\varphi:[-\tau,0]\to\mathbb{R}$ equipped with the norm $|\varphi|=\sup_{t\in[-\tau,0]} |\varphi(t)|$. We assume also 
\begin{equation} \label{eq.second moment psi}
\|\psi\|<+\infty.
\end{equation}
$\psi$ is also assumed to be independent of the Brownian motion $B$. 
Note that the finiteness of $\|\psi\|$ and the Dominated Convergence Theorem ensure that $t\mapsto \mathbb{E}[\psi^2(t)]$ is continuous on $[-\tau,0]$. Denote
\begin{equation} \label{def.phi}
\phi(t):=\sqrt{\mathbb{E}[\psi^2(t)]}, \quad t\in [-\tau,0],
\end{equation} 
so that $\phi^2(t)=\mathbb{E}[\psi^2(t)]$ for $t\in [-\tau,0]$. Both $\phi$ and $\phi^2$ are continuous. 

 We first state the unperturbed equation whose asymptotic behaviour is of paramount importance. If we let $\nu$ and $\mu$ be in $ M[-\tau,0]$, the space of finite Borel measures on $[-\tau,0]$, the unperturbed equation has the following form:
\begin{align} \label{eq. Unperturbed SFDE}
    dU(t) & = \left(\int_{[-\tau,0]}U(t+s)\nu(ds)\right)dt + \left(\int_{[-\tau,0]}U(t+s)\mu(ds)\right)dB(t), \quad t \geq 0\\
    U(t) & = \psi(t), \quad t\leq 0, \nonumber
\end{align}
where $\psi$ has the properties indicated above. \eqref{eq. Unperturbed SFDE} is so--called differential shorthand for 
\begin{align*}
 U(t) & = U(0)+\int_0^t \left(\int_{[-\tau,0]}U(s+u)\nu(du)\right)ds + \int_0^t \left(\int_{[-\tau,0]}U(s+u)\mu(du)\right)dB(s), \quad t \geq 0\\
U(t) & = \psi(t), \quad t\leq 0. \nonumber
\end{align*}
For every $\psi$ as specified above, there exists a unique, continuous, adapted
(to $(\mathcal{F}(t))_{t\geq 0}$) process $U=\{U(t,\psi):\,t\geq -\tau\}$ which satisfies \eqref{eq. Unperturbed SFDE}. The process is unique in the following sense: if there is another continuous, adapted process $\tilde{U}$ which satisfies \eqref{eq. Unperturbed SFDE}, then 
\[
\mathbb{P}[U(t)=\tilde{U}(t) \text{ for all $t\geq 0$}]=1.
\] 
This process $X$ is a so--called strong solution of \eqref{eq. Unperturbed SFDE}, and $U$ has finite second moments (cf., e.g., Mao~\cite[Theorem 5.2.7]{Mao}). This means that 
\[
\mathbb{E}[U^2(t)]<+\infty, \quad \text{ for all $t\geq -\tau$},
\]
and, a fortiori, 
\[
\mathbb{E}\left[\sup_{-\tau\leq s\leq t} U^2(s)\right] <+\infty, \quad \text{ for all $t\geq -\tau$}.
\]

The equation \eqref{eq. Unperturbed SFDE} was studied extensively by 
Appleby et al.~\cite{AMR} in which they gave a full characterisation of the mean square behaviour of \eqref{eq. Unperturbed SFDE}. This includes a set of necessary and sufficient conditions which ensures $\mathbb{E}[U^2(t,\psi)] \to 0$ as $t \to \infty$ for all initial functions $\psi$. It should be noted in \cite{AMR} the authors only considered deterministic initial functions: however with the additional assumption \eqref{eq.second moment psi},  
\textit{a condition which we will impose throughout this paper without further reference in our results}, all of their results carry over to the case of random initial functions. In  \cite{AMR} it was found that the stochastic stability is heavily dependent on the behaviour of the underlying deterministic equation and moreover the fundamental resolvent. In this paper we demonstrate that this still prevails and so we introduce both of these objects in detail. The deterministic\footnote{To use the term \emph{deterministic} to describe equation \eqref{eq. Deterministic x0} is technically incorrect due to the presence of the random initial function. However its dynamics are indeed deterministic so it is in this spirit that we will continue to refer to equation \eqref{eq. Deterministic x0} as \emph{deterministic}.}, unperturbed delay equation is given by,
\begin{align} \label{eq. Deterministic x0}
    \dot{x}_0(t)  & =  \int_{[-\tau,0]}x_0(t+u)\nu(du), \quad t\geq0,\\
    x_0(t) & = \psi(t), \quad t \in [-\tau,0], \nonumber
\end{align}
where both $\nu$ and $\psi$ are defined as above. There are many texts that deal with deterministic delay equations, for further analysis of \eqref{eq. Deterministic x0} we refer the reader to \cite{DGLW,HL}. The underlying integral resolvent is the unique locally absolutely continuous function $r:[0,\infty)\to \mathbb{R}$ which satisfies
\begin{equation} \label{eq. resolvent}
    r(t) = 1+\int_0^t \int_{[\max\{-\tau,-s\},0]}r(s+u)\nu(du)ds, \quad t\geq0. 
\end{equation}
The above equation can be written in differential form by specifying $r(0)=1$ and $r(t)=0$ for all $t<0$. 
In all results throughout this paper we need to make assumptions on the asymptotic behaviour of the resolvent $r$ and its connection with the measure $\nu$. The following description of the asymptotic behaviour is standard and may be found in~\cite{DGLW,HL}. As pointed out in Appleby et al.~\cite{AMR}, the following conditions on solutions to \eqref{eq. resolvent} are all equivalent:
\begin{itemize}
    \item[(a)]$r(t)\to 0$, as $t\to \infty;$
    \item[(b)] $r \in L^1(\mathbb{R}_+);$
    \item[(c)] $r \in L^2(\mathbb{R}_+)$.
\end{itemize}
Henceforth the above relations will be used interchangeably without reference. An important detail regarding the stability of the resolvent is that whenever any of the above conditions are fulfilled $r$ tends to zero \emph{exponentially} fast. To see this, one looks for solutions to \eqref{eq. resolvent} of exponential type which leads to a transcendental characteristic equation. For all $\lambda \in \mathbb{C}$ we may define 
\[
h(\lambda) = \lambda - \int_{[-\tau,0]} e^{\lambda s}\nu(ds).
\]
We pause to note that the second term on the righthand side has the character of a Laplace transform, and accordingly we will use the notation 
\[
\hat{\nu}(\lambda):=\int_{[-\tau,0]} e^{\lambda s}\nu(ds), \quad \lambda\in \mathbb{C}.
\]
For measurable functions $f$ defined on $[0,\infty)$, the usual Laplace transform is defined by 
\[
\hat{f}(\lambda):=\int_0^\infty e^{-\lambda s}f(s)\,ds
\]
for $\lambda$ in appropriate regions of $\mathbb{C}$.
 
Returning to a discussion of the solutions of the characteristic equation, it is standard that the set $\Lambda:=\{\lambda \in \mathbb{C}:h(\lambda)=0\}$ is non--empty and that there is a finite $v_0(\nu)\in \mathbb{R}$ such that 
\[
v_0(\nu) = \sup\{\text{Re}(\lambda): \lambda\in \Lambda\}.
\] 
Finally, it is the case that $r(t)\to 0$ as $t\to\infty$ is equivalent to $v_0(\nu)<0$. The significance of the number $v_0(\nu)$ is that it enables us to obtain a definite exponential bound on $r$. Specifically, for all $\alpha > \emph{v}_0(\nu)$ we have $ r(t)= o(\text{exp}(\alpha t))$ for $t\to \infty$. Indeed, we have a global exponential bound: for each $\alpha>v_0(\nu)$ there is a $K=K_\alpha>0$ such that $|r(t)|\leq K_\alpha e^{\alpha t}$ for all $t\geq 0$. This global exponential bound is inherited by $x_0(\cdot,\psi)$ which can be seen via a variation of constants formula 
\[
x_0(t,\psi)=r(t)\psi(0) + \int_{[-\tau,0]}\left(\int_s^0r(t+s-u)\psi(u)du\right)\nu(ds), \quad t\geq 0.
\]
Taking the triangle inequality, one obtains the bound 
\[
|x_0(t,\psi)|\leq C_\alpha e^{\alpha t}\sup_{s\in [-\tau,0]} |\psi(s)|, \quad t\geq 0,
\]
where $C_\alpha>0$ is a constant depends on $\alpha$ and $\nu$, but not on $\psi$. Since we assume that $\psi$ is random with $\mathbb{E}[\sup_{s \in [-\tau,0]} |\psi(s)|^2]<+\infty$, we get the estimate
\[
\mathbb{E}[x_0^2(t;\psi)]\leq C_\alpha^2 e^{2\alpha t} \mathbb{E}\left[\sup_{s\in [-\tau,0]} |\psi(s)|^2\right], \quad t\geq 0,
\]
for all $\alpha>v_0(\nu)$. We therefore observe that if $v_0(\nu)<0$, then all solutions tend to zero exponentially fast for deterministic initial conditions, and also exponentially fast in mean square if the initial function has a finite mean square in the sense given above.

The converse of this result is also true (namely that if all solutions of \eqref{eq. Deterministic x0} tend to zero, then $v_0(\nu)<0$). 

To see this, we make a general observation. If $\lambda \in \Lambda$, and $\psi(t)=e^{\lambda t}$ for $t\in [-\tau,0]$, then $x_0(t,\psi)=e^{\lambda t}$ for all $t\geq 0$. If $\lambda$ is real, then this furnishes a real--valued solution; in the case that $\lambda\in \mathbb{C}$, we can use the observation that the conjugate of $\lambda$, $\bar{\lambda}$ is also in $\Lambda$, to get real valued solutions. Note first that $x(t,a\psi_1+b\psi_2)=ax(t,\psi_1)+bx(t,\psi_2)$ for $t\geq 0$ and any $a,b\in \mathbb{C}$ and any continuous complex--valued initial functions $\psi_1$ and $\psi_2$. Taking 
$\psi_1(t)=e^{\lambda t}$ and $\psi_2(t)=e^{\bar{\lambda} t}$ and $a=b=1/2$, we see that the real--valued initial function $\psi(t)=\text{Re}(e^{\lambda t})$ for $t\geq 0$ gives rise to the real--valued solution $x(t,\psi)=\text{Re}(e^{\lambda t})$ for $t\geq 0$; likewise, taking $a=1/(2i)$ and $b=-1/(2i)$, the real--valued initial function $\psi(t)=\text{Im}(e^{\lambda t})$ for $t\geq 0$ gives rise to the real--valued solution $x(t,\psi)=\text{Im}(e^{\lambda t})$ for $t\geq 0$.  

Now, let $x_0(t,\psi)\to 0$ as $t\to\infty$ for all $\psi\in C([-\tau,0],\mathbb{R})$. Suppose, by way of contradiction, that $v_0(\nu)\geq 0$. Since in fact $\sup\{\text{Re}(\lambda): \lambda\in \Lambda\} = \max\{\text{Re}(\lambda): \lambda\in \Lambda\}$ we have that there is $\lambda\in \Lambda$ such that $\text{Re}(\lambda)=v_0(\nu)$. Now take $\psi(t)=\text{Re}(e^{\lambda t})$ for $t\in [-\tau,0]$. Then $x(t,\psi)= \text{Re}(e^{\lambda t})$ for $t\geq 0$. But since $\text{Re}(\lambda)=v_0(\nu)\geq 0$, $\limsup_{t\to\infty} |x(t,\psi)|>0$. But this contradicts the supposition that 
$x(t,\psi)\to 0$ as $t\to\infty$, so we have that $v_0(\nu)< 0$, as needed.   

Here, and in what follows, we place great weight on understanding the asymptotic behaviour of components in the solution of the perturbed stochastic equation, because it is a common feature in our proofs that we decouple the behaviour of the perturbed stochastic equation into parts which depend on either the functional appearing in the diffusion coefficient or on the underlying deterministic equation, i.e., the resolvent. Before discussing the mean square asymptotic stability of \eqref{eq. Unperturbed SFDE} we need to introduce the notation
\[
 G(f_t):=\int_{[-\tau,0]}f(t+u)\mu(du), \text{ for all } f\in C[0,\infty),
\]
for a measure $\mu\in M$. We recall the main result\footnote{This result excludes the pathological case in which deterministic solutions solve the stochastic equation i.e., $G([x_0]_t) \equiv 0$ for all initial functions, meaning the solution is no longer stochastic. We also exclude such cases throughout this paper.} from Appleby et al.~\cite{AMR} which states
\begin{equation} \label{cond. unperturbed stability}
\lim_{t\to \infty}\mathbb{E}[U^2(t,\psi)] = 0 \text{ for all $\psi$ obeying \eqref{eq.second moment psi}} \quad \iff \quad 
\begin{cases} 
r \in L^2(\mathbb{R}_+),\\
\Vert G(r_{\cdot})\Vert_{L^2(\mathbb{R}_+)}<1,
\end{cases}
\end{equation}
where for an $L^2$ function $f$ on $[0,\infty)$ we use the conventional notation 
\[
\Vert f\Vert_{L^2(\mathbb{R}_+)}:=\left(\int_0^\infty f^2(s)\,ds\right)^{1/2}.
\]
Although this stability theorem is stated in terms of objects that are not part of the problem data\footnote{It should be noted that upon further analysis, these conditions can be expressed as explicit conditions on the measures 
$\mu$ and $\nu$. This however is not the aim of current paper and will be addressed in a future work.}, it still offers some support to our intuition that provided the underlying deterministic system is stable (the first condition in \eqref{cond. unperturbed stability}) and the perturbation term is ``\emph{small}" (the second condition in \eqref{cond. unperturbed stability}), then the unperturbed stochastic equation remains asymptotically stable in a mean square sense.\\
\newline
We now introduce the perturbed equation which is defined on the same probability space introduced above
\begin{align} \label{eq. Perturbed Stochastic X}
    dX(t) & =\left(f(t)+\int_{[-\tau,0]}X(t+s)\nu(ds)\right)dt + \left(g(t)+\int_{[-\tau,0]}X(t+s)\mu(ds)\right)dB(t), \quad t\geq 0,\\
    X(t) & =\psi(t), \quad t\leq 0, \nonumber
\end{align}
where $f,g \in C(\mathbb{R}_+;\mathbb{R})$ are deterministic functions, and $\psi$ has the same properties as in the solution of \eqref{eq. Unperturbed SFDE}. As with condition \eqref{eq.second moment psi}, \textit{we will assume this continuity and determinism of $f$ and $g$ throughout the paper without further reference}. For existence and uniqueness of solutions with finite second moments of \eqref{eq. Perturbed Stochastic X}, in the sense described above for \eqref{eq. Unperturbed SFDE}, we refer the reader to the monograph by Mao~\cite{Mao}. Additionally we introduce notation for the perturbed deterministic equation
\begin{align} \label{eq. Perturbed Deterministic x }
\dot{x}(t) & =f(t)+\int_{[-\tau,0]}x(t+u)\nu(du), \quad t\geq 0,\\
x(t)& = \psi(t), \quad t\leq0.   \nonumber
\end{align}

Using Laplace transforms one can readily obtain a variation of constants formula for solutions of \eqref{eq. Perturbed Deterministic x } namely
\begin{equation} \label{eq. VOC deterministic perturbed x}
    x(t,\psi)=r(t)\psi(0) + \int_{[-\tau,0]}\left(\int_s^0r(t+s-u)\psi(u)du\right)\nu(ds)+\int_0^t r(t-s)f(s)ds, 
\end{equation}
for $t \geq0$. In order to represent such solutions and other expressions efficiently, we introduce standard  notions of convolution used throughout this paper. We denote the convolution of two functions on $[0,\infty)$ by 
\[
(f \ast g)(t) \coloneqq \int_0^tf(t-s)g(s)ds, \quad t\geq 0.
\]
If $\mu$ is a finite measure on $[-\tau,0)$, and $f:[0,\infty)\to\mathbb{R}$ then 
\[
(f\ast \mu)(t):=\int_{[-\tau,0]} f(t+s)\mu(ds), \quad t\geq 0.
\]
In order to use existing results on the convolution of functions with finite measures on $[0,\infty)$, we do the following. For any subset of the real line, write $-E:=\{x\in\mathbb{R}:-x\in E\}$. If $\mu \in M([-\tau,0];\mathbb{R})$, we can construct a $\tilde{\mu}\in M([0,\infty);\mathbb{R})$ by writing 
\begin{align} \label{eq. tildemu}
&\tilde{\mu}(E)=\mu(-E), \quad  \text{for any Borel set $E\subseteq [0,\tau]$} \nonumber, \\ &\tilde{\mu}(E)=0, \quad \text{for any Borel set $E$ with $E\cap[0,\tau] =\emptyset$}. 
\end{align}
Let $f:[0,\infty)\to\mathbb{R}$, and note that 
\[
(f\ast \mu)(t)=(\tilde{\mu}\ast f)(t):=\int_{[0,t]}\tilde{\mu}(ds)f(t-s), \quad t\geq 0. 
\]
It is with this consideration, along with our notation for equation \eqref{eq. Deterministic x0} that we may rewrite equation \eqref{eq. VOC deterministic perturbed x} as,
\begin{equation} \label{eq. x in terms of x_0 and x_1}
    x(t,\psi)=x_0(t,\psi)+x_1(t), \quad t \geq0,
\end{equation}
where $x_1(t) \coloneqq (r \ast f)(t)$. We do this to exploit the fact that $x_1$ is independent of $\psi$ and non--random, while $x_0$ is independent of $f$ and random: it also allows us to make use of the results we stated and deduced for $x_0$ earlier. 
 
To keep notation clean, from this point on we will frequently omit the dependence on the initial condition, will we write $x_0(t)=x_0(t,\psi)$, $x(t)=x(t,\psi)$ and $X(t)=X(t,\psi)$ for solutions of \eqref{eq. Deterministic x0}, \eqref{eq. Perturbed Deterministic x } and \eqref{eq. Perturbed Stochastic X} respectively. 

\section{Volterra Equations for the Mean Square}
Following the spirit of Appleby at al. \cite{AMR} we define a new process
\begin{equation} \label{eq. Y}
Y(t)=g(t)+\int_{[-\tau,0]}X(t+s)\mu(ds), \quad t\geq0.
\end{equation}
This allows to to readily write down a variation of constants formula for solutions of \eqref{eq. Perturbed Stochastic X}. By Lemma 6.1 from Rei\ss\, et al. \cite{RRG}, we have  
\begin{equation} \label{eq. VOC Perturbed Stochastic X}
  X(t)= 
  \begin{dcases} 
      x(t)+\int_0^tr(t-s)Y(s)dB(s), & t\geq0,\\
      \psi(t), & t \in [-\tau,0],
  \end{dcases}
\end{equation}
where $r$ is the resolvent given by equation \eqref{eq. resolvent}. Although \eqref{eq. VOC Perturbed Stochastic X} does not give an explicit solution for $X$, it does allow us to readily write down a deterministic Volterra equation for the mean square of $X$, and also an expression relating the mean square of $Y$ to the mean square of $X$. In so doing, the question of studying the mean square of the stochastic equation is converted into one of studying the solution of certain deterministic convolution integral equations, to which the extensive--- and much more widely understood--- theory of  deterministic equations can be applied. As such, the following result can be considered \textit{the most important one in the paper}. This is because it not only forms the basis for the particular asymptotic results derived here, but acts as a springboard in the future to a very complete understanding of the mean square of solutions of perturbed SFDEs, where the perturbations $f$ and $g$ may have other interesting properties.  

\begin{theorem} \label{thm. Mean Square equations for X and Y}
Let $X$ be the solution of \eqref{eq. Perturbed Stochastic X}. Then we have for all $t \geq 0$,
\begin{equation} \label{eq. Mean Sqaure X}
    \mathbb{E}[X^2(t)]=\mathbb{E}[x^2(t)]+\int_0^tr^2(t-s)\mathbb{E}[Y^2(s)]ds,
\end{equation}
where Y, defined by \eqref{eq. Y}, obeys for all $t \geq 0$,
\begin{equation} \label{eq. Mean Square Y}
    \mathbb{E}[Y^2(t)]=\mathbb{E}\left[\left(g(t)+G(x_t)\right)^2\right]+\int_0^tG^2(r_{t-s})\mathbb{E}[Y^2(s)]ds.
\end{equation}
\end{theorem}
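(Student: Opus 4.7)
The plan is to take the variation of constants formula \eqref{eq. VOC Perturbed Stochastic X} as the starting point for both identities, treating the It\^o integral as a martingale in the upper limit with respect to $(\mathcal{F}(u))_{u\geq 0}$, so that squaring and taking expectations can be handled via the It\^o isometry, after first eliminating the cross terms using the independence of $\psi$ and $B$.

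For \eqref{eq. Mean Sqaure X}, write $X(t)=x(t)+M(t)$ where $M(t):=\int_0^t r(t-s)Y(s)\,dB(s)$. The process $x(t)$ is $\mathcal{F}(0)$-measurable via \eqref{eq. x in terms of x_0 and x_1} together with the explicit representation of $x_0$, while for each fixed $t$ the process $u\mapsto \int_0^u r(t-s)Y(s)\,dB(s)$ is an $(\mathcal{F}(u))$-martingale started from $0$. Conditioning on $\mathcal{F}(0)$ therefore yields $\mathbb{E}[x(t)M(t)]=\mathbb{E}\bigl[x(t)\,\mathbb{E}[M(t)\mid\mathcal{F}(0)]\bigr]=0$, and the It\^o isometry applied to $M(t)^2$ produces the required term $\int_0^t r^2(t-s)\mathbb{E}[Y^2(s)]\,ds$.

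For \eqref{eq. Mean Square Y}, the aim is to derive a companion integral equation for $Y$ itself. Substituting \eqref{eq. VOC Perturbed Stochastic X} into the defining relation $Y(t)=g(t)+\int_{[-\tau,0]}X(t+s)\mu(ds)$, and using the conventions $r(u)=0$ for $u<0$ and $X(u)=\psi(u)=x(u)$ for $u\in[-\tau,0]$, one obtains the uniform representation $X(t+s)=x(t+s)+\int_0^t r(t+s-u)Y(u)\,dB(u)$ valid for every $s\in[-\tau,0]$ irrespective of the sign of $t+s$. Integrating against $\mu$ and invoking a stochastic Fubini theorem collapses the iterated integrals to $\int_0^t G(r_{t-u})Y(u)\,dB(u)$, leaving the closed equation $Y(t)=g(t)+G(x_t)+\int_0^t G(r_{t-u})Y(u)\,dB(u)$. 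Squaring, taking expectations, and repeating the cross-term argument ($g(t)+G(x_t)$ is $\mathcal{F}(0)$-measurable, the stochastic integral is conditionally mean zero given $\mathcal{F}(0)$) deliver \eqref{eq. Mean Square Y} after one further application of the isometry.

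The main technical obstacle is justifying the stochastic Fubini exchange in the derivation of the equation for $Y$. This needs sufficient joint integrability of $(s,u)\mapsto r(t+s-u)Y(u)$ against $\mu(ds)$ and the predictable $\sigma$-algebra on $[0,t]\times\Omega$. Since $r$ is continuous, hence bounded on compact intervals, $\mu$ is a finite Borel measure on $[-\tau,0]$, and $Y$ inherits finite second moments on bounded intervals from $X$ via the bound $\mathbb{E}[Y^2(s)]\leq 2g^2(s)+2|\mu|([-\tau,0])\int_{[-\tau,0]}\mathbb{E}[X^2(s+u)]\,|\mu|(du)$ and the well-posedness results cited after \eqref{eq. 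Perturbed Stochastic X}, the required $L^2$ bound is straightforward to check, but this verification should be made explicit to legitimise the interchange.
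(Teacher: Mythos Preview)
Your proposal is correct and follows essentially the same route as the paper: square the variation of constants formula, eliminate the cross term using the $\mathcal{F}(0)$--measurability of $x(t)$ (respectively $g(t)+G(x_t)$) together with the martingale property of the stochastic integral, apply the It\^o isometry, and derive the closed equation for $Y$ by substituting \eqref{eq. VOC Perturbed Stochastic X} into \eqref{eq. Y} and invoking the stochastic Fubini theorem. The only cosmetic difference is that the paper treats $t\geq\tau$ and $t\in[0,\tau]$ separately, whereas your use of the conventions $r(u)=0$ for $u<0$ and $X(u)=x(u)$ for $u\in[-\tau,0]$ handles both cases at once; your explicit mention of the $L^2$ bound needed to justify Fubini is also a welcome addition that the paper leaves implicit.
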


\begin{proof}[Proof of Theorem \ref{thm. Mean Square equations for X and Y}]
Squaring \eqref{eq. VOC Perturbed Stochastic X} gives
\begin{align} \label{eq: X2}
    X^2(t)=x^2(t)+2x(t)\int_0^tr(t-s)Y(s)dB(s)+\left(\int_0^tr(t-s)Y(s)dB(s)\right)^2.
\end{align}
We first consider the second cross term on the right hand side; by letting $t_0 \in [0,t]$ be arbitrary, we define
\[
M(t) \coloneqq 2x(t_0)\int_0^t r(t_0-s)Y(s)dB(s), \quad t\geq 0.
\]
Taking expectations and using It\^o's isometry one can show
\[
\mathbb{E}[|M(t)|] \leq \mathbb{E}[x^2(t_0)] +\int_0^t r^2(t_0-s)\mathbb{E}[Y^2(s)]ds,
\]
so $\mathbb{E}[|M(t)|] < \infty$ for all $t \geq 0$. On the other hand, because $x(t_0)$ has finite expectation, is independent of $B$ and is $\mathcal{F}(0)$--measurable, we have that for $t\geq s\geq 0$ 
\[
\mathbb{E}[M(t)|\mathcal{F}(s)]=2x(t_0)\mathbb{E}\left[  \int_0^t r(t_0-u)Y(u)dB(u)\Big|\mathcal{F}(s)\right] = M(s),
\]
using the fact that the second factor in $M$ is a martingale. Therefore, $M$ is a martingale and so $\mathbb{E}[M(t)]=0$ for all $t\geq t_0\geq 0$. In particular $\mathbb{E}[M(t_0)]=0$. As $t_0$ was chosen arbitrarily, this means that
\[
\mathbb{E}\left[2x(t)\int_0^tr(t-s)Y(s)dB(s)\right]=0, \quad t\geq 0.
\]
To deal with the squared term in \eqref{eq: X2}, we proceed similarly. Fix $t_0\geq 0$ and for $t\geq 0$ define  
\[
N(t)=\int_0^t r(t_0-s)Y(s)dB(s), \quad t\geq 0.
\]
Since $X$ has finite second moments, so does $Y$, and therefore $N$ is a martingale with finite second moments. Therefore, by It\^o's isometry, we have 
\[
\mathbb{E}[N^2(t)] = \int_0^t r^2(t_0-s)\mathbb{E}[Y^2(s)]ds.
\]
Now take $t=t_0$, so that 
\[
\mathbb{E}\left[\left(\int_0^{t_0} r(t_0-s)Y(s)dB(s)\right)^2\right] = \int_0^{t_0} r^2(t_0-s)\mathbb{E}[Y^2(s)]ds.
\]
Since $t_0$ is arbitrary, we may replace it  by $t$, and therefore taking expectations on both sides of \eqref{eq: X2}, we get 
\[
\mathbb{E}[X^2(t)]=\mathbb{E}[x^2(t)]+\int_0^tr^2(t-s)\mathbb{E}[Y^2(s)]ds,
\]
as required. Next we prove that $Y(t)$ obeys \eqref{eq. Mean Square Y}. Letting $t \geq \tau$, and using Fubini's theorem for stochastic integrals we can show
\begin{align*}
    Y(t) & = g(t) + \int_{[-\tau,0]}X(t+s)\mu(ds)\\
    & = g(t) + \int_{[-\tau,0]}\left( x(t+s)+\int_0^{t+s}r(t+s-u)Y(u)dB(u)\right)\mu(ds)\\
    & = g(t) + G(x_t) +  \int_{[-\tau,0]}\int_0^{t+s}r(t+s-u)Y(u)dB(u)\mu(ds)\\
    & = g(t) + G(x_t) + \int_0^t\left(\int_{[\max\{-\tau,u-t\},0]}r(t+s-u)\mu(ds)\right)Y(u)dB(u)\\
    & = g(t) + G(x_t) + \int_0^tG(r_{t-u})Y(u)dB(u),
\end{align*}
where in the last line we used the fact that $r(t) = 0$ for all $t < 0$. Notice that the integral is of the same form as that in \eqref{eq. VOC Perturbed Stochastic X}, and that the first term on the righthand side has the same properties as $x$, namely, independence from $B$, finite moments and $\mathcal{F}(0)$--measurability. Therefore, we can compute $\mathbb{E}[Y^2(t)]$ by following exactly the same steps as used to compute $\mathbb{E}[X^2(t)]$ above. Doing this, we get 
\[
\mathbb{E}[Y^2(t)]=\mathbb{E}\left[\left(g(t)+G(x_t)\right)^2\right]+\int_0^tG^2(r_{t-s})\mathbb{E}[Y^2(s)]ds,
\]
for $t>\tau$. It remains to obtain the corresponding integral equation for $\mathbb{E}[Y^2(t)]$ for $t\in [0,\tau]$. Proceeding as before, we have that
\begin{align*}
    Y(t) & = g(t) + \int_{[-\tau,-t]}X(t+s)\mu(ds)+ \int_{[-t,0]}X(t+s)\mu(ds)\\
     & = g(t) + \int_{[-\tau,-t]}\psi(t+s)\mu(ds)+ \int_{[-t,0]}x(t+s)\mu(ds)\\
     & \qquad+  \int_{[-t,0]}\int_0^{t+s}r(t+s-u)Y(u)dB(u)\mu(ds)\\ 
     & = g(t) + G(x_{t}) +  \int_{[-t,0]}\int_0^{t+s}r(t+s-u)Y(u)dB(u)\mu(ds).
\end{align*}
Then by invoking the stochastic Fubini theorem, and once again using the fact that $r(t)=0$ for all $t\leq 0$, we arrive once more at 
\[
Y(t) = g(t) + G(x_t) + \int_0^tG(r_{t-u})Y(u)dB(u), \quad t\in [0,\tau].
\]
Squaring and taking expectations, we get the same expression for $\mathbb{E}[Y^2(t)]$ deduced above on $[\tau,\infty)$, and this completes the proof.
\end{proof}

\section{Main Results for Asymptotic Stability}
With the integral equations for $\mathbb{E}[Y^2]$ and $\mathbb{E}[X^2]$ in hand, we are now ready to present asymptotic results: in this section we present three results characterising certain types of stability for solutions of \eqref{eq. Mean Sqaure X}. Although the equations for the mean square are deterministic, there are two special challenges to meet. Firstly, these integral equations are written in terms of objects such as $x$, which are not part of the problem data, and our goal is to determine conditions for asymptotic behaviour which can be stated more directly in terms of, and with minimal dependence on, the problem data. Secondly,  we wish to present necessary and sufficient conditions for certain types of stability, and we will try to do this by imposing conditions on the perturbing functions $f$ and $g$ \textit{which do not depend on the resolvent $r$, or the measures $\nu$ and $\mu$}. 
\subsection{Reformulation and preliminaries} 
In trying to keep notation as clean as possible we find the following definitions to be useful when proving all results in this section. If we define $Z(t) \coloneqq (r^2 \ast \mathbb{E}[Y^2])(t)$, then the equations for the mean square become
\begin{align} \label{eq. Z}
    \mathbb{E}[X^2(t)] & = \mathbb{E}[x^2(t)]+Z(t), \nonumber \\ 
    Z(t) & = \left(r^2 \ast \mathbb{E}\left[(g+G(x_\cdot))^2\right]\right)(t)+(G^2(r_\cdot) \ast Z)(t), 
\end{align}
for $t \geq 0$; the second equation was obtained by taking the convolution with $r^2$ across equation \eqref{eq. Mean Square Y}. Further defining
\begin{equation} \label{eq. gamma}
    \gamma(t)\coloneqq \left(r^2 \ast \mathbb{E}\left[(g+G(x_\cdot))^2\right]\right)(t), \quad t\geq 0,
\end{equation}
finally yields
\begin{equation} \label{eq. Z in terms of gamma}
Z(t)  = \gamma(t)+(G^2(r_\cdot) \ast Z)(t), \quad t\geq 0.
\end{equation}
Since the behaviour of $x$ depends on that of $x_0$ and $x_1=r\ast f$, which are known directly, the asymptotic behaviour of the mean square of $X$ is clinched by getting the asymptotic behaviour of $Z$. In this direction, it makes sense to introduce an integral resolvent $\rho$ which is independent of $\gamma$, but in terms of which $Z$ can be expressed. Let 
$\rho$ obey the equation
\begin{equation} \label{eq. rho}
  \rho(t) = G^2(r_t) + (G^2(r_\cdot) \ast \rho)(t), \quad t \geq 0,
\end{equation}
(see \cite[Ch. 2]{GLS}). Then
\[
Z(t)=\gamma(t)+(\rho\ast \gamma)(t), \quad t\geq 0.
\]
We begin this section with a lemma that provides an integrability result on $\rho$. 

To do so, we need first to deal with a special case, in which  $\|G(r)\|_{L^2(\mathbb{R}^+)}=0$. If this is the case, then $G(r_t)=0$ a.e. $t\geq 0$. Taking Laplace transforms across this equation gives $\hat{\mu}(z)\hat{r}(z)=0$ for values of $z\in \mathbb{C}$ for which $\text{Re}(z)>v_0(\nu)$ (for these values of $z$ we are guaranteed that $\hat{r}(z)$ is well--defined; since $\mu$ is finite and supported on $[-\tau,0]$, $\hat{\mu}(z)$ is defined for all $z\in \mathbb{C}$). But since for $\text{Re}(z)>v_0(\nu)$, we have $z-\hat{\nu}(z)\neq 0$ and $\hat{r}(z)(z-\hat{\nu}(z))=1$, it follows that $\hat{\mu}(z)=0$  for all $\text{Re}(z)>v_0(\nu)$. This implies that $\mu(E)=0$ for all Borel sets $E\subseteq [-\tau,0]$. As a consequence, in the case when $\|G(r)\|_{L^2(\mathbb{R}^+)}=0$, we have that  $X$ obeys the SFDE 
\[
dX(t)=\left(f(t)+\int_{[-\tau,0]}X(t+s)\nu(ds)\right)\,dt + g(t)\,dB(t), \quad t\geq 0.
\]
Thus $X$ has the representation  
\[
X(t,\psi) = x_0(t,\psi)+\int_0^t r(t-s)f(s)\,ds + \int_0^t r(t-s)g(s)\,dB(s), \quad t\geq 0,
\]
or 
\[
X(t,\psi) = x(t,\psi) + \int_0^t r(t-s)g(s)\,dB(s), \quad t\geq 0, \quad \text{a.s.}
\]
The mean square is given explicitly by 
\begin{equation} \label{eq. XsqG0}
\mathbb{E}[X^2(t)] = \mathbb{E}[x^2(t)] + \int_0^t r^2(t-s)g^2(s)ds,
\end{equation}
and can be studied by direct deterministic methods. To summarise, we have shown that $\|G(r_\cdot)\|_{L^2(\mathbb{R}^+)}=0$ if and only if $\mu$ is almost everywhere zero (the reverse implication is trivial) and in these cases, the mean square is given directly by \eqref{eq. XsqG0}. We will sometimes need to treat the situation when $\mu$ is zero idiosyncratically in our proofs, and in many cases appealing to \eqref{eq. XsqG0} directly suffices. 

The following lemma is needed when  $\|G(r_\cdot)\|_{L^2(\mathbb{R}^+)}>0$; in the case when  $\|G(r_\cdot)\|_{L^2(\mathbb{R}^+)}=0$, the lemma is not needed, and a direct appeal to \eqref{eq. XsqG0} can be made instead.
\begin{lemma} \label{lem. exponential integrability of rho}
    Let $\rho$ be the integral resolvent of \eqref{eq. Z in terms of gamma}, $r \in L^2(\mathbb{R}_+)$ and $0<\Vert G(r_{\cdot})\Vert_{L^2(\mathbb{R}_+)}^2<1$. Then there is an $\alpha>0, \alpha'>0$ such that the function
    \begin{equation} \label{eq. Capital Gamma function}
        \Gamma(\lambda) \coloneqq \int_0^\infty e^{2\lambda s}G^2(r_s)ds,
    \end{equation}
    is well defined for $\lambda \in [0,\alpha)$, and furthermore,
    \begin{equation*}
        \int_0^\infty e^{2\epsilon s}\rho(s)ds < \infty,
    \end{equation*}
    for all $\epsilon \in [0,\alpha')$ where $\alpha'$ is the unique number such that $\Gamma(\alpha')=1$ . 
\end{lemma}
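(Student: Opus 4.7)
The plan is to first pin down the maximal interval on which $\Gamma$ is well defined, then locate $\alpha'$ by an intermediate-value argument, and finally convert the weighted integrability of $\rho$ into an $L^1$ Neumann-series estimate for a modified renewal equation. The first step uses the exponential decay of $r$: since $r\in L^2(\mathbb{R}_+)$ forces $v_0(\nu)<0$, for each $\beta\in(v_0(\nu),0)$ there is $K_\beta>0$ with $|r(t)|\leq K_\beta e^{\beta t}$ on $\mathbb{R}_+$. Because $\mu$ is a finite measure on $[-\tau,0]$ and $r$ is bounded on $[0,\tau]$, this gives $G^2(r_s)\leq C_\beta e^{2\beta s}$ on $\mathbb{R}_+$, so $\Gamma(\lambda)<\infty$ for $\lambda<-\beta$. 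Sending $\beta\downarrow v_0(\nu)$ shows $\Gamma$ is finite on $[0,\alpha)$ with $\alpha:=-v_0(\nu)>0$. On this interval $\Gamma$ is continuous by dominated convergence (with a majorant $e^{2\lambda_1\cdot}G^2(r_\cdot)$ for any fixed $\lambda_1$ slightly above the target limit) and strictly increasing (the integrand is pointwise strictly monotone in $\lambda$ and $G^2(r_\cdot)\not\equiv 0$ by hypothesis), with $\Gamma(0)=\|G(r_\cdot)\|_{L^2}^2\in(0,1)$.

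The central obstacle is to verify that $\Gamma$ actually attains the value $1$ inside $[0,\alpha)$: once this is known, strict monotonicity and continuity immediately furnish the unique crossing point $\alpha'\in(0,\alpha)$. I would argue $\Gamma(\lambda)\to\infty$ as $\lambda\uparrow\alpha$ through the Laplace-transform identity $\widehat{G(r_\cdot)}(z)=\hat r(z)\hat\mu(z)$. Since $\hat r$ is meromorphic with rightmost singularities on the line $\text{Re}(z)=v_0(\nu)$, the hypothesis $\|G(r_\cdot)\|_{L^2}>0$, which rules out $G(r_\cdot)\equiv 0$, prevents the entire function $\hat\mu$ from cancelling every such singularity; were it to, a Paley--Wiener argument would force $G(r_\cdot)$ to have compact support, in which case a direct lower bound on $\Gamma(\lambda)$ over that support still yields $\Gamma(\lambda)\to\infty$ as $\lambda\to\infty$. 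Thus in either case the $L^2$-abscissa of $s\mapsto e^{\lambda s}G(r_s)$ coincides with $\alpha$, and $\Gamma(\lambda)=\|e^{\lambda\cdot}G(r_\cdot)\|_{L^2}^2$ blows up at the boundary; the intermediate value theorem then delivers $\alpha'$.

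For $\epsilon\in[0,\alpha')$, multiplying \eqref{eq. rho} by $e^{2\epsilon s}$ and redistributing the weight across the convolution gives
\begin{equation*}
e^{2\epsilon s}\rho(s)=e^{2\epsilon s}G^2(r_s)+\int_0^s\bigl[e^{2\epsilon(s-u)}G^2(r_{s-u})\bigr]\bigl[e^{2\epsilon u}\rho(u)\bigr]\,du.
\end{equation*}
Setting $\tilde\rho(s):=e^{2\epsilon s}\rho(s)$ and $\tilde k(s):=e^{2\epsilon s}G^2(r_s)$, this is a non-negative Volterra renewal equation $\tilde\rho=\tilde k+\tilde k\ast\tilde\rho$ whose kernel obeys $\|\tilde k\|_{L^1(\mathbb{R}_+)}=\Gamma(\epsilon)<1$ by strict monotonicity of $\Gamma$ together with $\Gamma(\alpha')=1$. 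Summing the Neumann series $\tilde\rho=\sum_{n\geq 1}\tilde k^{*n}$, whose non-negative terms are controlled termwise by Young's inequality through $\|\tilde k^{*n}\|_{L^1}\leq\Gamma(\epsilon)^n$, produces $\|\tilde\rho\|_{L^1(\mathbb{R}_+)}\leq\Gamma(\epsilon)/(1-\Gamma(\epsilon))<\infty$, which is precisely the claimed $\int_0^\infty e^{2\epsilon s}\rho(s)\,ds<\infty$.
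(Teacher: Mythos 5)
Your first step (the exponential bound on $G(r_\cdot)$ giving finiteness of $\Gamma$ on $[0,\alpha)$, plus continuity, strict monotonicity and $\Gamma(0)<1$) and your final step (scaling \eqref{eq. rho} by $e^{2\epsilon t}$ and summing the non-negative Neumann series, which is essentially the paper's integrate-and-Fubini computation) are sound. The gap is in the middle step, where you establish existence of the crossing point $\alpha'$ with $\Gamma(\alpha')=1$. You argue via a dichotomy: either $\hat{\mu}$ fails to cancel some singularity of $\hat{r}$ on the line $\mathrm{Re}(z)=v_0(\nu)$, in which case $\Gamma(\lambda)\to\infty$ as $\lambda\uparrow\alpha=-v_0(\nu)$; or it cancels all of them, in which case a Paley--Wiener argument forces $G(r_\cdot)$ to have compact support. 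That dichotomy is false. The zeros of $h$ with real part $v_0(\nu)$ are finitely many, and the entire function $\hat{\mu}$ can vanish at exactly those points without $\mu$ being the zero measure; then $\widehat{G(r_\cdot)}=\hat{r}\hat{\mu}$ extends analytically to a strictly larger half-plane $\mathrm{Re}(z)>v_1$, where $v_1<v_0(\nu)$ is the real part of the next zeros of $h$, so $G(r_\cdot)$ decays at the faster rate $e^{v_1 t}$ yet is not compactly supported (Paley--Wiener would require removing \emph{all} singularities of $\hat r$, not just the rightmost ones). In that intermediate case the $L^2$-abscissa of $e^{\lambda\cdot}G(r_\cdot)$ is strictly larger than $\alpha$, your asserted blow-up at $\alpha$ fails, the claim $\alpha'\in(0,\alpha)$ may be false, and your argument as written produces no crossing point at all; even in your first branch the inference from an uncancelled pole to divergence of $\Gamma$ at $\alpha$ is asserted rather than proved.

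The repair is that blow-up at $\alpha$ is never needed (the lemma does not claim $\alpha'<\alpha$). Work on the maximal interval of finiteness $[0,\beta)$ of $\Gamma$, where $\beta\geq\alpha$ and $\beta=\infty$ is allowed. If $\beta<\infty$, monotone convergence gives $\Gamma(\lambda)\to\infty$ as $\lambda\to\beta^-$; if $\beta=\infty$, then since $G^2(r_\cdot)$ is non-trivial there exist $\delta>0$ and a set $E\subseteq[\delta,\infty)$ of positive measure with $\int_E G^2(r_s)\,ds>0$, whence $\Gamma(\lambda)\geq e^{2\lambda\delta}\int_E G^2(r_s)\,ds\to\infty$ as $\lambda\to\infty$. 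Either way $\Gamma$ is continuous, increasing, starts below one and tends to infinity on $[0,\beta)$, so the intermediate value theorem yields the unique $\alpha'\in(0,\beta)$ (possibly $\geq\alpha$), and since $\Gamma(\epsilon)<1$ for all $\epsilon\in[0,\alpha')$ your Neumann-series estimate then goes through unchanged; this is exactly the route the paper takes.
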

\begin{proof}[Proof of Lemma \ref{lem. exponential integrability of rho}]
    To prove the first assertion, note the assumption on $r\in L^2(\mathbb{R}_+)$ gives us the estimate
    \[
    |r(t)| \leq Ke^{-\alpha t}, \quad t \geq 0,
    \]
    and for some $K>0$ and $\alpha>0$ (where $-\alpha>v_0(\nu)$). One can extend this estimate to $G(r_\cdot)$: 
    \begin{align*}
        |G(r_t)| & = \left| \int_{[-\tau,0]}r(t-s)\nu(ds)\right| \leq e^{-\alpha t}\int_{[-\tau,0]}e^{\alpha s}|\nu|(ds)
        =K'e^{-\alpha t},
    \end{align*}
    for some $K'>0$, thus we have $G^2(r_t) \leq Ce^{-2\alpha t}$. Here we are using the conventional notation $|\nu|$ for the total variation measure of $\nu$, which is a positive and finite measure in $M[-\tau,0]$ (see e.g., \cite[Thm. 6.2]{Rudin2}). We also exploit here the fundamental estimate 
    \[
   \left| \int_{[-\tau,0]} f(t+s)\nu(ds)\right| \leq \int_{[-\tau,0]} |f(t+s)||\nu|(ds),
    \] 
    for measurable functions $f$, which can be deduced from the case for finite measures on $[0,\infty)$ (see \cite[Thm. 3.4.5]{GLS}).  
    
    Using the estimate for $G(r_\cdot)$ in \eqref{eq. Capital Gamma function} 
		gives, for all $0 \leq \lambda<\alpha$,
    \[
    \Gamma(\lambda) \leq \int_0^\infty e^{-2s(\alpha-\lambda)}ds < +\infty.
    \]
		On the other hand, $\Gamma$ is clearly non--decreasing on its maximal interval of existence. Moreover, since $t\mapsto G(r_t)$ is non--trivial, it follows that either there is a finite $\beta>0$ such that $\Gamma$ is well defined on $[0,\beta)$ and $\Gamma(\lambda)\to\infty$ as $\lambda\to\beta^-$ or that $\Gamma$ is defined on $[0,\infty)$ and $\Gamma(\lambda)\to\infty$ as $\lambda\to \infty$. Therefore, $\Gamma$ is continuous and increasing, $\Gamma(0)<1$ and $\Gamma(\lambda) \to \infty$ as $\lambda \to \beta^-$ (where $\beta=\infty$ is possible). Thus by the intermediate value theorem, there is a unique $\alpha'<\beta$ such that $\Gamma(\alpha')=1$. Since $\Gamma$ is well defined on $[0,\alpha)\subseteq [0,\beta)$ we have $\alpha\leq \beta$. Thus, we may choose any $\epsilon\in (0,\alpha')$ such that $\Gamma(\epsilon)<1$.
	
	For the second statement, choose such an $\epsilon \in (0,\alpha')$, and scale equation \eqref{eq. rho} by $e^{2\epsilon t}$ to get
    \[
      \rho(t)e^{2\epsilon t} = G^2(r_t)e^{2\epsilon t} + \int_0^\infty e^{2\epsilon(t-s)}G^2(r_{t-s})\cdot e^{2\epsilon s}\rho(s)ds, \quad t \geq 0.
    \]
    Using the notation $\rho_\epsilon(t) \coloneqq \rho(t)e^{2\epsilon t}$ and $G_\epsilon^2(r_t) \coloneqq e^{2\epsilon t} G^2(r_{t})$ we can rewrite the above equation as
    \begin{equation} \label{eq. scaled rho}
        \rho_\epsilon(t) = G_\epsilon^2(r_t) + \int_0^\infty G_\epsilon^2(r_{t-s})\rho_\epsilon(s)ds, \quad t\geq 0.
    \end{equation}
    Now integrating equation \eqref{eq. scaled rho}, applying Fubini's theorem and using the fact that $r(t)=0$ for $t<0$, we see that
\[
\int_0^{\infty}\rho_\epsilon(s)ds = \dfrac{\int_0^{\infty}G_\epsilon^2(r_s)ds}{1-\int_0^{\infty}G_\epsilon^2(r_s)ds},
\]
where we have exploited the fact that $\Gamma(\epsilon)=\int_0^{\infty}G_\epsilon^2(r_s)ds<1$. Thus
\[
\int_0^{\infty}\rho_\epsilon(s)ds=\int_0^{\infty}e^{2\epsilon s}\rho(s)ds,
\]
is well defined for all $\epsilon \in [0,\alpha')$.
\end{proof}
\subsection{Statement of main results}
With the above preliminaries dispensed with, we are in a position to state our main results on asymptotic behaviour. We consider three types of convergence of solutions, and describe necessary and sufficient conditions on the perturbations $f$ and $g$, as well as the underlying unperturbed equation $U$, for which each type of convergence result holds. 

The common theme of the results is two--fold: first of all, in order that solutions $X$ of the perturbed equation have the appropriate behaviour in mean square, it is necessary that the solutions $U$ of the unperturbed equation tend to zero in mean square, and indeed this forms part of the sufficient conditions for convergence of the perturbed equations too. The second common feature is that the behaviour of the perturbing terms $f$ and $g$ can be quite irregular or ``out of control'' on a pointwise basis, but nevertheless the mean square of the solution will be well--behaved. Roughly speaking, if $g$ is such that $\int_t^{t+1} g^2(s)\,ds$ has the appropriate decay property, and $\int_{t}^{t+\delta} f(s)\,ds$ has the appropriate decay property for all $\delta\in (0,1]$, then $\mathbb{E}[X^2]$ will have the decay property. In fact, these average conditions on $f$ and $g$ turn out to be necessary for the appropriate decay property in the mean square of $X$: such decay in the mean square implies these ``sectional averages'' of $f$ and $g$ must have the stipulated decay too.     

\begin{theorem} \label{thm. mean square X to zero}
Let $X$ be the solution to equation \eqref{eq. Perturbed Stochastic X}. Suppose that $\psi$ obeys \eqref{eq.second moment psi}. Then the following conditions (\textbf{A}) and (\textbf{B}) are equivalent:
\begin{itemize}
    \item [(\textbf{A})]
        \begin{itemize}
            \item[(i)] $r \in L^2(\mathbb{R}_+)$,
            \item[(ii)] $\Vert G(r_{\cdot})\Vert_{L^2(\mathbb{R}_+)}<1$,
            \item [(iii)] For all $\delta \in (0,1],$ $\int_t^{t+\delta}f(s)ds \to 0$ as $t \to \infty$,
            \item[(iv)] $\int_{t}^{t+1}g^2(s)ds \to 0$ as $t \to \infty$.
        \end{itemize}
    \item[(\textbf{B})] $\lim_{t \to \infty}\mathbb{E}[X^2(t,\psi)]=0$ for all $\psi \in C([-\tau,0];\mathbb{R})$.
\end{itemize}    
\end{theorem}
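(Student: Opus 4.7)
The plan is to exploit the decomposition $\mathbb{E}[X^2(t)] = \mathbb{E}[x^2(t)] + Z(t)$ with $Z = \gamma + \rho\ast\gamma$ and $x = x_0 + x_1$ set up immediately before Theorem \ref{thm. mean square X to zero}, and to treat the two implications separately.

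For sufficiency (\textbf{A}) $\Rightarrow$ (\textbf{B}), condition (i) gives exponential decay of $r$ and hence of $\mathbb{E}[x_0^2]$. For $x_1 = r\ast f$ I would apply the Strauss--Yorke/\cite[Lem.~15.9.2]{GLS} decomposition $f = f_1 + (f-f_1)$: (iii) with $\delta = 1$ gives $f_1(t)\to 0$, so $r\ast f_1\to 0$ by a dominated-convergence splitting; while the identity $\int_0^t(f-f_1)(s)\,ds = \int_0^1 f_\delta(t)\,d\delta$, combined with integration by parts and $\int_0^\infty r'(u)\,du = -1$, delivers $r\ast(f-f_1)\to 0$. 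For the stochastic part, (i)--(ii) and Lemma \ref{lem. exponential integrability of rho} give $\rho\in L^1(\mathbb{R}_+)$ (with $\rho\ge 0$ from the Neumann-series representation of the resolvent of the non-negative kernel $G^2(r_\cdot)$). Setting $h(t) := \mathbb{E}[(g(t)+G(x_t))^2]$, (iv) controls the $g^2$ contribution, while $\mathbb{E}[G^2(x_t)] \le |\mu|([-\tau,0])\int\mathbb{E}[x^2(t+s)]\,|\mu|(ds)\to 0$ (using $\mathbb{E}[x^2]\to 0$), together with Cauchy--Schwarz on the cross term, shows the sectional averages of $h$ vanish. Since $r^2$ is non-negative, integrable and exponentially decaying, a standard dyadic splitting yields $\gamma = r^2\ast h\to 0$; the $L^1$-integrability of $\rho$ then gives $Z\to 0$. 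The degenerate case $\|G(r_\cdot)\|_{L^2(\mathbb{R}_+)} = 0$ (i.e.\ $\mu\equiv 0$) is handled directly via \eqref{eq. XsqG0}.

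For necessity (\textbf{B}) $\Rightarrow$ (\textbf{A}), taking two solutions $X_1, X_2$ of \eqref{eq. Perturbed Stochastic X} with the same $f, g$ but different initial functions $\psi_1, \psi_2$, their difference $U := X_1 - X_2$ solves the unperturbed SFDE \eqref{eq. Unperturbed SFDE} with initial datum $\psi_1 - \psi_2$, and $\mathbb{E}[U^2(t)] \le 2\mathbb{E}[X_1^2(t)] + 2\mathbb{E}[X_2^2(t)]\to 0$ for every choice, so (i), (ii) follow from the AMR characterisation \eqref{cond. unperturbed stability}. For (iii) and (iv) I would specialise to $\psi\equiv 0$, so $x = x_1$ is deterministic and $\mathbb{E}[X^2(t)] = x_1^2(t) + Z(t)$ forces both $x_1(t)\to 0$ and $Z(t)\to 0$; non-negativity of $\rho$ and $\gamma$ gives $\gamma\le Z$, hence $\gamma\to 0$. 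The decisive step is to use $r(0) = 1$ and continuity to fix $\epsilon > 0$ with $r^2 \ge 1/2$ on $[0,\epsilon]$, yielding
\[
\gamma(t) \ge \tfrac{1}{2}\int_{t-\epsilon}^t \mathbb{E}\!\left[(g(s)+G(x_{1,s}))^2\right]\,ds, \qquad t\ge\epsilon.
\]
From the elementary bound $g^2\le 2(g+G(x_{1,\cdot}))^2 + 2G^2(x_{1,\cdot})$ and the fact that $G(x_{1,\cdot})\to 0$ (because $x_1\to 0$), we conclude $\int_{t-\epsilon}^t g^2(s)\,ds\to 0$; concatenating finitely many $\epsilon$-windows upgrades this to interval length $1$, which is (iv). Finally, (iii) follows by integrating the ODE $x_1'(t) = f(t) + \int_{[-\tau,0]} x_1(t+s)\,\nu(ds)$ over $[t,t+\delta]$ and using $x_1\to 0$ together with boundedness of $|\nu|$.

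The main obstacle I anticipate is exactly the necessity of (iii) and (iv): one must recover short-interval averages of the perturbations from convolution information about $r\ast f$ and $r^2\ast h$. The essential mechanism is the boundary value $r(0) = 1$, which lets us lower-bound $\gamma$ by a sectional average of $h$; the subsequent isolation of $g$ from the asymptotically negligible $G(x_1)$ term relies on the inequality $g^2\le 2(g+G)^2+2G^2$. A minor separate complication is the degenerate case $\|G(r_\cdot)\|_{L^2(\mathbb{R}_+)} = 0$, where Lemma \ref{lem. exponential integrability of rho} does not apply and one argues directly from \eqref{eq. XsqG0}.
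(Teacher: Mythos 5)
Your proposal is correct in substance, and on the sufficiency side it follows essentially the paper's route (the splitting $\mathbb{E}[X^2]=\mathbb{E}[x^2]+Z$ with $Z=\gamma+\rho\ast\gamma$ and Lemma~\ref{lem. exponential integrability of rho}), but on the necessity side it takes a genuinely different and shorter path for \textbf{(A)}(i)--(ii): you observe that the difference of two solutions of \eqref{eq. Perturbed Stochastic X} driven by the same Brownian motion (same $f,g$, initial data $\phi$ and $0$) solves the unperturbed equation \eqref{eq. Unperturbed SFDE} with initial function $\phi$, so \textbf{(B)} yields mean--square stability of $U$ for every initial function, and (i)--(ii) then drop out of the quoted characterisation \eqref{cond. unperturbed stability}. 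The paper instead argues self--containedly: it gets (i) from $x_0(\cdot,\psi)\to 0$ for all deterministic $\psi$ via the characteristic--root argument ($v_0(\nu)<0$), and (ii) by constructing a deterministic $\psi$ for which $\gamma$ is strictly positive on an interval and deriving a contradiction from Alsmeyer's renewal theorems when $\|G(r_\cdot)\|_{L^2(\mathbb{R}_+)}\geq 1$. Your route buys brevity and avoids renewal theory, at the price of leaning on the full strength of the cited AMR equivalence (including its extension to the standing assumptions of this paper and the excluded degenerate case; for $\mu\equiv 0$ you should note (ii) is trivial and (i) can be read off \eqref{eq. XsqG0} directly), whereas the paper's longer argument is reused almost verbatim in the later theorems. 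Your remaining deviations are minor and sound: for $x_1=r\ast f\to 0$ you apply the Gripenberg--Londen--Staffans decomposition directly to $f$ and integrate by parts (using $r'\in L^1$, $r(0)=1$), and for the necessity of (iii) you integrate the equation for $x_1$ over $[t,t+\delta]$, where the paper in both places routes through the comparison ODE of Lemma~\ref{lem. perturbed ODE}; and in isolating $\int_{t-1}^t g^2(s)\,ds\to 0$ your elementary bound $g^2\leq 2(g+G)^2+2G^2$ replaces, and simplifies, the paper's Cauchy--Schwarz plus quadratic--root step. Do make explicit in a final write--up that the window lower bound on $\gamma$ and the concatenation of $\epsilon$--windows require only $\gamma\leq Z\to 0$ (nonnegativity of the kernel in \eqref{eq. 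Z in terms of gamma}), and that the convergence $\int_0^1 f_\delta(t)\,d\delta\to 0$ is exactly what the cited \cite[Lem.~15.9.2]{GLS} supplies.
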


Note that conditions (i) and (ii) in \textbf{(A)} are equivalent to $\mathbb{E}[U^2(t,\psi)]\to 0$ as  $t\to\infty$ for all $\psi$. Therefore, the convergence of the solution in mean square to zero is equivalent to the global asymptotic stability of the unperturbed equation, coupled with the decay properties of $f$ and $g$ in (iii) and (iv). 

We note that the conditions (iii) and (iv) are fulfilled for functions $f$ and $g$ for which $f(t)\to 0$ as $t\to\infty$ and $g(t)\to 0$ as $t\to\infty$. However, $f$ and $g$ can be substantially less well--behaved, and still the conditions (iii) and (iv) can be fulfilled. 

For instance, let $n\in \mathbb{N}$, and $a_n<1/2$ and $w_n:=1/2-a_n$ and $h_n$ be positive sequences. Suppose that $f(t)=0$ for $[n,n+a_n]$ and $[n+1-a_n, n+1]$ and on $[n+a_n,n+a_n+w_n]$, $f$ is linear with $f(n+a_n)=0$ 
and $f(n+a_n+w_n)=h_n$, while on $[n+a_n+w_n,n+1-a_n]$, $f$ is linear with $f(n+1-a_n)=0$. Then $f$ is continuous, has a spike of width $2w_n$ and maximal height $h_n$, and obeys 
\[
\int_n^{n+1} f(t)\,dt = w_n h_n.
\]  
Since $f$ is non--negative, we have that condition (iii) is satisfied if and only if $w_n h_n \to 0$ as $n\to\infty$. On the other hand, if $h_n\to \infty$ as $n\to\infty$, $f$ can be unbounded, with the running maximum of $f$ having an arbitrarily fast rate of growth. Thus the spike in $f$ can be arbitrarily high, provided it has a sufficiently short duration (i.e. $w_n=o(1/h_n)$ as $n\to\infty$, and $o$ is the conventional ``little $o$'' Landau notation). 

Taking $g=\sqrt{f}$ in the above supplies an example with $\limsup_{t\to\infty} |g(t)|=+\infty$ for which condition (iv) applies. 

An example of an $f$ with no sign restrictions, which satisfies (iii), but which has very bad pointwise behaviour, is 
\begin{equation} \label{eq. exphfsin}
f(t)=e^{\alpha t} \sin(e^{\beta t}), \quad t\geq 0,
\end{equation}
where $0<\alpha<\beta$. Write $T=e^{\beta t}$, $A=e^{\beta \delta}$. Note that $\epsilon:=1-\alpha/\beta\in (0,1)$. Then
\[
\int_t^{t+\delta} f(s)\,ds = \frac{1}{\beta}\int_T^{AT} u^{-\epsilon} \sin(u)\,du.
\] 
Integrating the right hand side by parts we see that it is $O(T^{-\epsilon})$ 
as $T\to\infty$, where we use the conventional ``big $O$'' Landau notation. Therefore 
\[
\int_t^{t+\delta} f(s)\,ds = O(e^{-(\beta-\alpha)t}), \quad t\to\infty,
\]
for each $\delta>0$. 

We notice that the condition with an absolute value inside the integral is too restrictive.  For instance, if $f$ obeys 
\[
\int_t^{t+\delta} |f(s)|\,ds\to 0 \quad \text{as $t\to\infty$, for any $\delta>0$},
\] 
then this condition implies \textbf{(A)}(iii), and is equivalent to \textbf{(A)}(iii) when $f$ does not change sign on $[0,\infty)$. However, it can be shown for $f$ obeying \eqref{eq. exphfsin} with $\alpha\in (0,\beta)$ the integral $\int_t^{t+\delta} |f(s)|\,ds$ diverges as $t\to\infty$. To see this, use the notation above; integration by substitution gives  
\[
\int_{t}^{t+\delta} |f(s)|\,ds = \frac{1}{\beta}\int_T^{AT} u^{-\epsilon} |\sin(u)|\,du.
\] 
Recall that $\epsilon\in (0,1)$ and consider intervals on which $|\sin(u)|\geq 1/2$. Bounding the integral below by considering only these intervals, we see that the lower bound grows at a rate 
\[
C \int_T^{AT} u^{-\epsilon}\,du \geq C'T^{1-\epsilon}, 
\]
where $C$ and $C'$ are $T$--independent and strictly positive. Since $\epsilon\in (0,1)$, the lower bound diverges, and hence 
\[
\int_{t}^{t+\delta} |f(s)|\,ds \geq C'e^{(\beta-\alpha)t}, \quad t\to\infty. 
\]

In applications, understanding when convergence to limiting values is exponentially fast is often important. We turn to this next. First, it is not hard to show that the mean square asymptotic stability of the unperturbed equation implies the exponential decay in the mean square of $U$. It is therefore natural to ask what conditions on $f$ and $g$ preserve this exponential convergence in the mean square of solutions of \eqref{eq. Perturbed Stochastic X}. Once again, this exponential convergence occurs if and only if the unperturbed equation is mean square asymptotically stable, and $f$ and $g$ obey an exponential decay bound. 

\begin{theorem} \label{thm. mean square X exponential decay}
	Let $X$ be the solution to equation \eqref{eq. Perturbed Stochastic X}. Suppose that $\psi$ obeys \eqref{eq.second moment psi}. Then the following conditions (\textbf{A}) and (\textbf{B}) are equivalent:
	\begin{itemize}
		\item [(\textbf{A})]
		\begin{itemize}
			\item[(i)]  $r \in L^2(\mathbb{R}_+)$,
			\item[(ii)]  $\Vert G(r_{\cdot})\Vert_{L^2(\mathbb{R}_+)}<1$,
			\item [(iii)]  There is a $\beta_1>0$ such that $\int_0^\infty e^{2\beta_1 s}g^2(s)ds < \infty$. 
			\item[(iv)]  There is a $\beta_2>0$ such that $t \mapsto \left| \int_0^t e^{\beta_2 s}f(s)ds\right|$ is uniformly bounded. 
		\end{itemize}
		\item[(\textbf{B})] $\mathbb{E}[X^2(t;\psi)] \leq C^2(\psi,f,g)e^{-2\alpha(f,g) t}$, for all $\psi \in C([-\tau,0];\mathbb{R})$ with $\alpha(f,g)>0$.
	\end{itemize}  
\end{theorem}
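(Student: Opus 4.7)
The overall plan is to read off exponential mean square decay of $X$ from the decomposition $\mathbb{E}[X^2(t)]=\mathbb{E}[x^2(t)]+Z(t)$ with $Z=\gamma+\rho\ast\gamma$ supplied by Theorem~\ref{thm. Mean Square equations for X and Y}, using the exponential integrability of the resolvent $\rho$ guaranteed by Lemma~\ref{lem. exponential integrability of rho}; the degenerate case $\|G(r_\cdot)\|_{L^2(\mathbb{R}_+)}=0$ is treated separately via the explicit formula \eqref{eq. XsqG0} and proceeds by the same ideas but without invoking Lemma~\ref{lem. exponential integrability of rho}.

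For the implication $(\mathbf{A})\Rightarrow(\mathbf{B})$, conditions (i) and (ii) yield an exponential bound $|r(t)|\le Ke^{-\alpha_r t}$ together with the existence of a resolvent $\rho$ that decays exponentially by Lemma~\ref{lem. exponential integrability of rho}. Exponential decay of $\mathbb{E}[x_0^2]$ is then immediate, and to bound $x_1=r\ast f$ I would set $F(t)=\int_0^t e^{\beta_2 s}f(s)\,ds$ (uniformly bounded by (iv)), write $f(s)\,ds=e^{-\beta_2 s}\,dF(s)$, and integrate by parts, exploiting the exponential bound on both $r$ and $r'$ (the latter from $r'(t)=(r\ast\nu)(t)$) to conclude that $x_1$ decays exponentially. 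The bound $\mathbb{E}[(g+G(x_\cdot))^2]\le 2g^2+2\mathbb{E}[G^2(x_\cdot)]$, combined with a dyadic split of the convolution $r^2\ast g^2$ (the $s\le t/2$ part uses the exponential bound on $r^2$, and the $s\ge t/2$ part uses condition (iii)), shows $\gamma$ decays exponentially, and an analogous dyadic split of $\rho\ast\gamma$ completes the exponential bound on $Z$.

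For the reverse direction $(\mathbf{B})\Rightarrow(\mathbf{A})$, the linearity of \eqref{eq. Perturbed Stochastic X} gives $U(t;\psi_1-\psi_2)=X(t;\psi_1)-X(t;\psi_2)$ almost surely, so (B) and the triangle inequality in $L^2(\Omega)$ yield $\mathbb{E}[U^2(t;\psi)]\to 0$ for every $\psi$, and the characterisation \eqref{cond. unperturbed stability} then furnishes (i) and (ii). For (iv), I specialise to $\psi=0$, so that the mean $m(t)=\mathbb{E}[X(t)]$ coincides with $x_1(t)=(r\ast f)(t)$ and the bound $|x_1(t)|\le\mathbb{E}[X^2(t;0)]^{1/2}$ gives exponential decay of $x_1$. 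Since $x_1$ satisfies $x_1'=f+x_1\ast\nu$ with $x_1(0)=0$, rewriting $f=x_1'-x_1\ast\nu$, multiplying by $e^{\beta s}$ for $\beta$ less than the decay rate of $x_1$, and integrating by parts transfers the exponential decay of $x_1$ to the uniform boundedness of $t\mapsto\int_0^t e^{\beta s}f(s)\,ds$.

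The main obstacle is (iii). The plan is to use that still with $\psi=0$, (B) forces
\[
\int_0^t r^2(t-s)\mathbb{E}[Y^2(s;0)]\,ds \le \mathbb{E}[X^2(t;0)]\le C^2 e^{-2\alpha t},
\]
so multiplying by $e^{2\beta t}$ for $\beta<\alpha$, integrating in $t$, and applying Fubini's theorem gives
\[
\left(\int_0^\infty e^{2\beta u}r^2(u)\,du\right)\int_0^\infty e^{2\beta s}\mathbb{E}[Y^2(s;0)]\,ds\le\frac{C^2}{2(\alpha-\beta)}.
\]
Because $r$ is continuous with $r(0)=1$, the leading factor is strictly positive, so $s\mapsto e^{2\beta s}\mathbb{E}[Y^2(s;0)]$ is integrable. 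The elementary inequality $(a+b)^2\ge a^2/2-b^2$ applied to $Y=g+G(X_\cdot)$ produces $\mathbb{E}[Y^2(s)]\ge g^2(s)/2-\mathbb{E}[G^2(X_s)]$, and Cauchy--Schwarz together with (B) shows $\mathbb{E}[G^2(X_s)]$ decays exponentially for $s\ge\tau$; combining these inequalities delivers $\int_0^\infty e^{2\beta s}g^2(s)\,ds<+\infty$, which is (iii) with $\beta_1=\beta$.
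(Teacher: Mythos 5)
Your proposal is correct, and while it shares the paper's overall skeleton (the decomposition $\mathbb{E}[X^2]=\mathbb{E}[x^2]+Z$ with $Z=\gamma+\rho\ast\gamma$, Lemma~\ref{lem. exponential integrability of rho} for $\rho$, and the separate treatment of trivial $\mu$ via \eqref{eq. XsqG0}), several steps take genuinely different routes. For the forward direction you control $x_1=r\ast f$ by writing $f(s)\,ds=e^{-\beta_2 s}\,dF(s)$ with $F(t)=\int_0^t e^{\beta_2 s}f(s)\,ds$ and integrating by parts against the exponential bounds on $r$ and $r'=r\ast\nu$, whereas the paper compares $x_1$ with the solution $u$ of $u'=-\beta_2u+f$ and solves for $\delta=x_1-u$ by variation of constants; both give the same estimate. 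For necessity of \textbf{(A)}(i)--(ii) you use linearity, $X(\cdot;\psi_1)-X(\cdot;\psi_2)=U(\cdot;\psi_1-\psi_2)$ a.s., plus the characterisation \eqref{cond. unperturbed stability}, where the paper simply invokes Theorem~\ref{thm. mean square X to zero}; for \textbf{(A)}(iv) you write $f=x_1'-x_1\ast\nu$ and integrate $e^{\beta s}f(s)$ by parts, again equivalent to the paper's comparison-ODE argument run in reverse. The most substantive divergence is \textbf{(A)}(iii): you weight \eqref{eq. Mean Sqaure X} by $e^{2\beta t}$, integrate and apply Tonelli to get $\bigl(\int_0^\infty e^{2\beta u}r^2(u)\,du\bigr)\bigl(\int_0^\infty e^{2\beta s}\mathbb{E}[Y^2(s;0)]\,ds\bigr)<\infty$, then use $(a+b)^2\ge a^2/2-b^2$ and the exponential decay of $\mathbb{E}[G^2(X_s)]$; the paper instead exploits $r^2\ge 1/2$ near $0$, covers $[t-1,t]$ by finitely many such windows, and runs a Cauchy--Schwarz/quadratic-root argument to get $\int_{t-1}^t g^2(s)\,ds\le Ce^{-2\alpha t}$ before summing over unit intervals. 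Your route is shorter and delivers the exponential integrability of $g^2$ in one stroke for every $\beta<\alpha(f,g)$, needing only the strict positivity of $\int_0^\infty e^{2\beta u}r^2(u)\,du$ (from $r(0)=1$ and continuity) -- state that positivity explicitly, since otherwise the Tonelli identity could formally read $\infty\cdot 0$; the paper's windowed estimate is more laborious but yields pointwise-in-$t$ decay of the unit-interval averages, information it reuses elsewhere. One small imprecision: Lemma~\ref{lem. exponential integrability of rho} gives exponential \emph{integrability} of $\rho$, not pointwise exponential decay, but your dyadic splits only use the integral bound, so the argument stands as written.
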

We notice once again from conditions (iii) and (iv) that neither $f$ nor $g$ need to obey pointwise exponential bounds, but that rather they exhibit exponential decay ``on average''. 

The conditions \textbf{(A)}(iii) and \textbf{(A)}(iv) are equivalent to conditions which appear stronger, and give more freedom to choose the exponents $\beta_1$ and $\beta_2$. In fact, \textbf{(A)}(iii) and \textbf{(A)}(iv) give exponential integrability for all $\beta$ sufficiently small. This is clear in the condition for $g$: if
\[
\int_0^\infty e^{2\beta_1 s}g^2(s)ds < \infty,
\]  
then obviously for all $\beta<\beta_1$,
\[
\int_0^\infty e^{2\beta s}g^2(s)ds \leq \int_0^\infty e^{2\beta_1 s}g^2(s)ds < \infty.
\]
However, this is perhaps less obvious in the case of the condition \textbf{(A)}(iii) on $f$. Assume, as in \textbf{(A)}(iii), that there is a $\beta_2>0$ and $B>0$ such that 
\[
\left|\int_0^t e^{\beta_2 s}f(s)ds\right|\leq B, \quad t\geq 0. 
\]
We will show that this implies 
\[
\left|\int_0^t e^{\beta s}f(s)ds\right|\leq 2B, \quad t\geq 0.
\]
for all $\beta\in (0,\beta_2]$. 

To prove this claim, let $\beta\in (0,\beta_2]$ and define $u'_\beta(t)=-\beta u_\beta(t)+f(t)$ for $t\geq 0$, with $u_\beta(0)=0$. Notice that $|u_{\beta_2}(t)|\leq Be^{-\beta_2 t}$ for all $t\geq 0$ by hypothesis. Let $\delta_\beta=u_\beta-u_{\beta_2}$. 
Then 
\[
\delta_\beta'=-\beta u_\beta+\beta_2 u_{\beta_2} = -\beta (\delta_\beta+u_{\beta_2})+\beta_2 u_{\beta_2}.
\]
Therefore
\[
\delta_\beta(t)= \int_0^t e^{-\beta(t-s)}(\beta_2-\beta)u_{\beta_2}(s)\,ds, \quad t\geq 0.
\]
Thus
\[
|\delta_\beta(t)|\leq B(\beta_2-\beta) e^{-\beta t} \int_0^t e^{(\beta-\beta_2) s} \,ds
\leq B(\beta_2-\beta) e^{-\beta t} \int_0^\infty e^{-(\beta_2-\beta) s} \,ds
=Be^{-\beta t}.
\]
Therefore $u_\beta(t)=\delta_\beta(t)+u_{\beta_2}(t)$ obeys 
\[
|u_\beta(t)|\leq Be^{-\beta t} + B e^{-\beta_2 t}\leq 2Be^{-\beta t}.
\] 
But since $u_\beta$ is the convolution of $e^{-\beta t}$ and $f$, this gives 
\[
\left|\int_0^t e^{-\beta(t-s)}f(s)\,ds\right| \leq 2B e^{-\beta t}, \quad t\geq 0,
\]
which gives the desired $\beta$--uniform estimate claimed above, namely
\[
\left|\int_0^t e^{\beta s}f(s)\,ds\right| \leq 2B, \quad t\geq 0, \quad \beta\in (0,\beta_2].
\]

Notice the character of the exponential bound in \textbf{(B)}: the estimate of the rate of decay $\alpha$ can depend on $f$ and $g$ (and of course, on $r$), but it does not depend on $\psi$: the $\psi$--dependence is instead confined to the multiplier of the decaying exponential. Of course, we also expect $f$--, $g$-- and $r$--dependence in this multiplier. In the proof, we do not attempt to make a very fine estimate of $\alpha$: however, scrutiny of the proof suggests that the faster the decay in $r$, $g$ and $f$, larger is the estimate on $\alpha$, and the faster the rate of mean square convergence to zero.  

As pointed out earlier, the function $f(t)=e^{\alpha t}\sin(e^{\beta t})$ for $t\geq 0$, where $0<\alpha<\beta$,  obey an exponentially decaying estimate of the form
\[
\int_{t}^{t+\delta} f(s)\,ds = \text{O}(e^{-(\beta-\alpha)t}), \quad t\to\infty
\]   
for any choice of $\delta>0$, despite the fact that $f$ itself is exponentially unbounded. We show now that this exponential decay arises in exactly the form necessary for Theorem \ref{thm. mean square X exponential decay}. To see this, note for any $\eta>0$ that  
\[
\int_0^t e^{\eta s} f(s)\,ds = \int_0^t e^{(\eta+\alpha)s} \sin(e^{\beta s})\,ds
= \frac{1}{\beta}\int_{1}^T  u^{(\eta+\alpha)/\beta-1}\sin(u)\,du 
\]
where $T=e^{\beta t}$. Integration by parts yields
\begin{multline*}
\int_{1}^T  u^{(\eta+\alpha)/\beta-1}\sin(u)\,du
=  -T^{(\eta+\alpha)/\beta-1} \cos(T) + \cos(1) \\ + \left(\frac{\eta+\alpha}{\beta}-1\right)\int_1^T  u^{(\eta+\alpha)/\beta-2}\cos(u)\,du.
\end{multline*}
Therefore, as $T\to\infty$, the righthand side is bounded provided $0<\eta<\beta-\alpha$, and an $\eta$--independent upper bound can be obtained. Hence, for each $\eta\in (0,\beta-\alpha)$ we have that 
\[
\left|  \int_0^t e^{\eta s} f(s)\,ds \right| \leq B, \quad \text{ for all $t\geq 0$},
\]
so if $\int_0^\infty e^{2\beta_1 s}g^2(s)\,ds <+\infty$ for some $\beta_1>0$, then we will have exponential decay in the mean--square (contingent on the unperturbed equation being globally asymptotically stable in the mean--square).   

Theorem \ref{thm. mean square X exponential decay} shows that if the decay in $f$ and $g$ is not exponential, we do see exponential convergence in the mean square. However, in applications it is often of interest to know if solutions are integrable in the mean square. Thus, we ask what  conditions are necessary and sufficient for 
\[
\int_0^\infty \mathbb{E}[X^2(t,\psi)]\,dt <+\infty.
\]
As in previous theorems, the asymptotic mean square stability of the unperturbed equation is essential. But since this implies also the exponential decay to zero of the mean square of $U$, this means that  
\[
\int_0^\infty \mathbb{E}[U^2(t,\psi)]\,dt <+\infty,
\]
is necessary for the mean square integrability of $X$. Moreover, this mean square integrability will be preserved provided $f$ and $g$ satisfy the appropriate square integrability conditions.  
\begin{theorem} \label{thm. mean square X in L1}
		Let $X$ be the solution to equation \eqref{eq. Perturbed Stochastic X}. Suppose that $\psi$ obeys \eqref{eq.second moment psi}.
	Then the following conditions \textbf{(A)} and \textbf{(B)} are equivalent:
	\begin{itemize}
		\item [(\textbf{A})]
		\begin{itemize}
			\item[(i)]  $r \in L^2(\mathbb{R}_+)$,
			\item[(ii)]  $\Vert G(r_{\cdot})\Vert_{L^2(\mathbb{R}_+)}<1$,
			\item[(iii)]  $t\mapsto\int_0^t e^{-(t-s)}f(s)ds \in L^2(\mathbb{R}_+)$,          
			\item [(iv)]  $g \in L^2(\mathbb{R}_+)$.
		\end{itemize}
		\item[(\textbf{B})] $\mathbb{E}[X^2(\cdot;\psi)] \in L^1(\mathbb{R}_+)$, for all $\psi \in C([-\tau,0];\mathbb{R})$.
	\end{itemize}  
\end{theorem}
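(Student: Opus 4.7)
\textbf{Sufficiency $(\textbf{A})\Rightarrow(\textbf{B})$.} The plan is to work from the Volterra representation $\mathbb{E}[X^2(t)] = \mathbb{E}[x^2(t)] + Z(t)$ with $Z = \gamma + \rho * \gamma$ developed in Section~3, reducing the problem to $L^1$ estimates on the deterministic functions $\rho$, $\gamma$, and $\mathbb{E}[x^2]$. Integrating the resolvent equation for $\rho$ over $\mathbb{R}_+$ and using non-negativity of $\rho$ together with (ii) yields
\[
\|\rho\|_{L^1(\mathbb{R}_+)} = \frac{\|G(r_\cdot)\|_{L^2(\mathbb{R}_+)}^2}{1 - \|G(r_\cdot)\|_{L^2(\mathbb{R}_+)}^2} < \infty,
\]
matching Lemma~\ref{lem. exponential integrability of rho} at $\epsilon=0$. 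For $\gamma \in L^1$, by (i) we have $r^2 \in L^1$, so by Young it suffices to show $\mathbb{E}[(g + G(x_\cdot))^2]\in L^1$; via $(g + G(x_{0,t}) + G(x_{1,t}))^2 \leq 3(g^2 + G(x_{0,t})^2 + G(x_{1,t})^2)$, (iv) handles $g^2$ and the exponential decay of $\mathbb{E}[x_0^2]$ from (i) handles $\mathbb{E}[G(x_{0,\cdot})^2]$. The remaining term $G(x_{1,\cdot})^2 \in L^1$ requires $x_1 = r*f \in L^2$: setting $y = e^{-\cdot} * f \in L^2$ by (iii) with $y(0)=0$, the identity $f = y + y'$ and integration by parts give $x_1 = y + r*y + r'*y$, which lies in $L^2$ since $r, r' \in L^1$ (exponential decay of $r$). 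A Cauchy--Schwarz estimate against the finite measure $\mu$ then gives $G(x_1) \in L^2$. Combining, $\gamma \in L^1$, $Z \in L^1$, and $\mathbb{E}[x^2] \in L^1$ follows from $x = x_0 + x_1$ together with the exponential decay of $\mathbb{E}[x_0^2]$ and $x_1 \in L^2$.

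\textbf{Necessity $(\textbf{B})\Rightarrow(\textbf{A})$.} Write $X = U + V$, where $U$ solves \eqref{eq. Unperturbed SFDE} with datum $\psi$ and $V$ solves \eqref{eq. Perturbed Stochastic X} with zero initial data (so $V$ is independent of $\psi$). The parity identity
\[
\mathbb{E}[X^2(\cdot,\psi)] + \mathbb{E}[X^2(\cdot,-\psi)] = 2\mathbb{E}[U^2(\cdot,\psi)] + 2\mathbb{E}[V^2]
\]
plus non-negativity gives $\mathbb{E}[U^2(\cdot,\psi)] \in L^1$ for all $\psi$. Choosing deterministic $\psi(s) = \mathrm{Re}(e^{\lambda s})$ with $\lambda\in\Lambda$ attaining $v_0(\nu)$ yields $\mathbb{E}[U^2] \geq x_0^2$, and since $(\mathrm{Re}\,e^{\lambda\cdot})^2 \not\in L^1$ whenever $\mathrm{Re}\,\lambda \geq 0$, we conclude $v_0(\nu)<0$, giving (i). For (ii) (assuming $\mu \not\equiv 0$; the case $\mu\equiv 0$ is handled directly via~\eqref{eq. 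XsqG0}), applying Laplace transforms at $\lambda=0$ to the resolvent equation $\mathbb{E}[\tilde Y^2] = \mathbb{E}[G(x_{0,\cdot})^2] + G^2(r_\cdot) * \mathbb{E}[\tilde Y^2]$ for $U$, and using $\|r\|_{L^2}^2 > 0$ to extract $\mathbb{E}[\tilde Y^2] \in L^1$ from $r^2 * \mathbb{E}[\tilde Y^2] \in L^1$, yields
\[
\left(1 - \|G(r_\cdot)\|_{L^2(\mathbb{R}_+)}^2\right)\int_0^\infty \mathbb{E}[\tilde Y^2(t)]\,dt = \int_0^\infty \mathbb{E}[G(x_{0,t})^2]\,dt;
\]
choosing $\psi$ so the right-hand side is strictly positive forces (ii). For (iii), take $\psi=0$ so $x = x_1 = r*f$; then $\mathbb{E}[X^2] = x_1^2 + Z$ with both summands non-negative is in $L^1$, so $x_1 \in L^2$. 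Laplace transforms give $\hat y(\lambda) = \frac{\lambda - \hat\nu(\lambda)}{1+\lambda}\hat x_1(\lambda)$, the multiplier is $H^\infty$ on $\{\mathrm{Re}\,\lambda>0\}$, so Paley--Wiener delivers $y = e^{-\cdot}*f \in L^2$, i.e.\ (iii). Lastly for (iv), with (i), (ii) in hand and $\psi=0$, the Laplace identity applied to $Z$ gives $\|r\|_{L^2}^2 \cdot \|(g+G(x_1))^2\|_{L^1}/(1 - \|G(r_\cdot)\|_{L^2}^2) = \hat Z(0) < \infty$, so $g + G(x_1) \in L^2$; since $G(x_1) \in L^2$ from $x_1\in L^2$, (iv) follows.

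\textbf{Main obstacle.} The most delicate point is the necessity of (ii). While (i) drops out from a simple lower bound of $\mathbb{E}[U^2]$ by a non-decaying deterministic solution, excluding $\|G(r_\cdot)\|_{L^2}^2 \geq 1$ requires combining the Laplace-transform identity above with the strict positivity of $\mathbb{E}[G(x_{0,\cdot})^2]$ for a suitable $\psi$, together with justification (via the argument surrounding $\|G(r_\cdot)\|_{L^2}=0$ in the lead-up to Lemma~\ref{lem. exponential integrability of rho}) that $\mu \not\equiv 0$ produces non-trivial $G(x_0)$, and with separate treatment of the degenerate case $\mu\equiv 0$ via the explicit representation~\eqref{eq. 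XsqG0}.
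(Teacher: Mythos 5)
Your proposal is essentially correct and shares the paper's overall skeleton (the mean--square Volterra equations of Theorem~\ref{thm. Mean Square equations for X and Y}, $\rho\in L^1$ as in Lemma~\ref{lem. exponential integrability of rho} at $\epsilon=0$, $\gamma\in L^1$, and $x_1=r\ast f\in L^2$ as the pivot), but several sub-arguments differ genuinely from the paper's. For sufficiency you obtain $x_1\in L^2$ from $f=y+y'$ and integration by parts, giving $x_1=y+r\ast y+r'\ast y$ with $r,r'\in L^1$; the paper instead compares $x_1$ with $u$ solving $u'=-u+f$ via $\delta=x_1-u$ and the variation of constants $\delta=r\ast v$ --- both are valid and of comparable length. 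For necessity of (i) you use the parallelogram law on $X(\cdot,\psi)=U(\cdot,\psi)+X(\cdot,0)$ to extract $\mathbb{E}[U^2]\in L^1$ and then bound below by $x_0^2$ for the critical exponential initial function; the paper works directly with $\mathbb{E}[X^2]\geq \mathbb{E}[x^2]$ and $x_0=x(\cdot,\psi)-x(\cdot,0)$, reaching the same contradiction. For (ii) you integrate the unperturbed $Y$--equation at $\lambda=0$ and use strict positivity of $\int_0^\infty G^2([x_0]_t)\,dt$ for a $\psi$ with $\int\psi\,d\mu\neq 0$; the paper integrates the $Z$--equation of the perturbed problem and uses positivity of $\gamma$ on an interval --- these are close cousins, and your extraction of $\mathbb{E}[\tilde Y^2]\in L^1$ from $r^2\ast\mathbb{E}[\tilde Y^2]\in L^1$ needs (and admits) exactly the lower-bound argument the paper uses for its function $A$ in Step 3. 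Your route to (iv) via the integrated identity at $\psi=0$ is in fact cleaner than the paper's Cauchy--Schwarz/quadratic-root manipulation.

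The one step that is not rigorous as written is the necessity of (iii). You write $\hat y(\lambda)=\frac{\lambda-\hat\nu(\lambda)}{1+\lambda}\hat x_1(\lambda)$, but the natural derivation of this identity passes through $\hat f$, and under hypothesis \textbf{(B)} the function $f$ is only continuous and need not be Laplace transformable on any half-plane; likewise $y=e^{-\cdot}\ast f$ has no a priori growth bound, so you cannot speak of $\hat y$ before knowing $y\in L^2$, and even after producing an $H^2$ function $m\hat x_1$ and its Paley--Wiener preimage $h\in L^2(\mathbb{R}_+)$, identifying $h$ with $y$ requires a further argument. The repair is to work in the time domain: with $k(t)=e^{-t}+\int_{[0,\min(t,\tau)]}e^{-(t-s)}\tilde\nu(ds)\in L^1(\mathbb{R}_+)$ one checks (by comparing transforms of the exponentially bounded functions $r$, $k$, $e^{-\cdot}$ only, then invoking uniqueness and local Fubini) that $e^{-\cdot}=r-k\ast r$, whence $y=x_1-k\ast x_1\in L^2$ by Young; this is precisely the paper's $\delta$-argument in Step 2 in disguise. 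With that substitution your proof is complete.
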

Notice that $g\in L^2(\mathbb{R}_+)$ is equivalent to 
\[t\mapsto
\int_t^{t+1} g^2(s)\,ds \in  L^1(\mathbb{R}_+) 
\]
so the condition on $g$ can still be framed in terms of the average over unit intervals, as in earlier theorems. 

By contrast the condition on $f$ in \textbf{(A)}(iii) differs from those in the previous theorems. Rather than asking that (for example)
\begin{equation}\label{eq.fdeltainL2}
t\mapsto 
\int_t^{t+\delta} f(s)\,ds \in L^2(\mathbb{R}_+) \quad \text{ for all $\delta\in (0,1]$},
\end{equation}
we ask that  $t\mapsto\int_0^t e^{-(t-s)}f(s)ds \in L^2(\mathbb{R}_+)$. However, we know from Lemma~\ref{lem. perturbed ODE} below that $\int_0^t e^{-(t-s)}f(s)ds$ tends to zero as $t\to\infty$ if and only if $\int_t^{t+\delta} f(s)\,ds\to 0$ for all $\delta\in (0,1]$. This leads us to speculate that in fact condition \textbf{(A)}(iii) is equivalent to \eqref{eq.fdeltainL2}. Indeed, showing this would be of interest for deterministic functional differential equations also, since it would enable one to show that the solution of 
\[
x'(t)=\int_{[-\tau,0]}x(t+s)\nu(ds)+f(t), \quad t\geq 0 
\] 
is in $L^2(\mathbb{R}_+)$ if and only if $t\mapsto\int_t^{t+\delta} f(s)\,ds \in L^2(\mathbb{R}_+)$ for all $\delta\in (0,1)$ (contingent on $r\in L^1(\mathbb{R}_+)$). We hope to address this conjecture in a forthcoming work.  

\section{Proofs}
We start with a result which connects the convergence condition on $\int_t^{t+\delta} f(s)\,ds$ with the convergence of a certain ordinary differential equation.
\begin{lemma} \label{lem. perturbed ODE}
Let $f\in C(0,\infty)$ and $u$ be the solution of
\begin{equation} \label{eq. perturbed ODE}
    u'(t)=-u(t)+f(t), \quad t\geq 0,
\end{equation}
with initial condition $u(0)=0$. If $\int_t^{t+\delta}f(s)ds\to 0$ as $t \to \infty$ for all $\delta \in (0,1]$, then,
\[
\lim_{t \to \infty}u(t)=0.
\]
Conversely, if $u(t)\to 0$ as $t\to\infty$, then $\int_t^{t+\delta}f(s)ds\to 0$ as $t \to \infty$ for all $\delta \in (0,1]$.
\end{lemma}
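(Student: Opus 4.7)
The plan is to handle the two implications separately.

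The converse direction is immediate: integrating the ODE $u'+u=f$ over $[t,t+\delta]$ yields
\[
\int_t^{t+\delta}f(s)\,ds = u(t+\delta)-u(t)+\int_t^{t+\delta}u(s)\,ds,
\]
so that every term on the right tends to zero whenever $u(t)\to 0$ (the last because $\sup_{s\geq t}|u(s)|\to 0$), giving the sectional-average conclusion for all $\delta\in(0,1]$.

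For the forward direction I would start from the variation-of-constants representation $u(t)=\int_0^t e^{-(t-s)}f(s)\,ds$, extend $f\equiv 0$ on $(-\infty,0)$, and apply the decomposition recalled in the introduction: write $f = f_1+I$ with $f_1(s)=\int_{s-1}^s f(u)\,du$ and $I=f-f_1$. Linearity splits $u=u_2+u_1$ with $u_2(t)=\int_0^t e^{-(t-s)}f_1(s)\,ds$ and $u_1(t)=\int_0^t e^{-(t-s)}I(s)\,ds$. For $u_2$: the hypothesis at $\delta=1$ gives $f_1(s)\to 0$, and a standard cutoff argument (split the convolution at a threshold $S$ beyond which $|f_1|<\varepsilon$, using $\int_0^S|f_1(s)|\,ds<\infty$ and the exponential decay of the kernel on the initial piece) yields $u_2(t)\to 0$. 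For $u_1$: the key identity from the introduction,
\[
J(t) := \int_0^t I(s)\,ds = \int_0^1 f_\delta(t)\,d\delta \quad \text{for } t\geq 1,
\]
combined with integration by parts in the convolution integral, produces
\[
u_1(t) = J(t)-\int_0^t e^{-(t-s)}J(s)\,ds,
\]
so once $J(t)\to 0$ is established, both terms vanish (the second via the same cutoff argument used for $u_2$).

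The main obstacle is therefore the claim $J(t)\to 0$. By hypothesis $f_\delta(t)\to 0$ pointwise in $\delta$, so the task is to pass the limit through the $\delta$-integral; the subtlety is that $f$ itself need not be bounded, so no obvious integrable pointwise-in-$\delta$ dominating function is available. My plan is to split $J(t)=\int_0^\varepsilon f_\delta(t)\,d\delta+\int_\varepsilon^1 f_\delta(t)\,d\delta$ for small $\varepsilon>0$: for the second piece, a compactness argument on $[\varepsilon,1]$ together with continuity of $(\delta,t)\mapsto f_\delta(t)$ upgrades pointwise convergence to $\sup_{\delta\in[\varepsilon,1]}|f_\delta(t)|\to 0$; and for the first piece, applying Bonnet's second mean-value theorem to the Fubini rewriting $\int_0^\varepsilon f_\delta(t)\,d\delta=\int_{t-\varepsilon}^t(\varepsilon-(t-u))f(u)\,du$ reduces this piece to $\varepsilon\cdot f_{\eta_t}(t)$ for some $\eta_t\in[0,\varepsilon]$. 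Uniformly controlling this residual---which I expect to be the most delicate step, likely requiring an auxiliary uniform bound of the form $\sup_{\eta\in[0,1]}|f_\eta(t)|=O(1)$ for large $t$---together with the first-piece estimate, then allows one to send $t\to\infty$ at fixed small $\varepsilon$ and afterwards $\varepsilon\to 0$ to obtain $J(t)\to 0$ and close the argument.
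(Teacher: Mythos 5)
Your converse argument (integrate the ODE over $[t,t+\delta]$) and the architecture of your forward direction coincide with the paper's proof: the paper also splits $f$ into the unit sectional average plus a remainder, represents $u$ by variation of constants, integrates by parts so that only the primitive of the remainder appears, and disposes of the convolution terms because an $L^1$ kernel convolved with a bounded continuous function vanishing at infinity tends to zero (\cite[Thm.~2.2.2]{GLS}). The one point where you diverge is that the paper imports the decomposition $f=f_1+f_2$, $f_1\in BC_0(\mathbb{R}_+;\mathbb{R})$, $\int_0^t f_2(s)\,ds\to 0$, wholesale from \cite[Lem.~15.9.2]{GLS}, whereas you attempt to prove the key fact $J(t)=\int_0^1 f_\delta(t)\,d\delta\to 0$ yourself — and that is exactly where your proposal has a genuine gap.

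Two steps in your treatment of $J$ do not hold up. First, the claim that compactness of $[\varepsilon,1]$ together with continuity of $(\delta,t)\mapsto f_\delta(t)$ ``upgrades'' pointwise convergence to $\sup_{\delta\in[\varepsilon,1]}|f_\delta(t)|\to 0$ is not a valid inference: pointwise convergence of continuous functions on a compact parameter set gives uniform convergence only under equicontinuity in $\delta$ uniform in $t$, and this is precisely what fails when $f$ is unbounded, since $|f_\delta(t)-f_{\delta'}(t)|=\bigl|\int_{t-\delta}^{t-\delta'}f(s)\,ds\bigr|$ need not be small uniformly in $t$ (take the paper's own example $f(t)=e^{\alpha t}\sin(e^{\beta t})$). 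Second, the auxiliary bound $\sup_{\delta\in[0,1]}|f_\delta(t)|=O(1)$ for large $t$, which you correctly identify as the delicate ingredient, is left unproven — yet once you have it, dominated convergence yields $J(t)\to 0$ outright and the $\varepsilon$-splitting and Bonnet's theorem are superfluous; without it, the small-$\delta$ piece is uncontrolled and the argument does not close. The bound is true, but it requires a real argument (for instance, a Baire-category/uniform-boundedness step giving a $t$-uniform bound for $\delta$ in some subinterval, propagated to all $\delta\in(0,1]$ via the identity $f_{\delta+\delta'}(t)=f_{\delta'}(t)+f_{\delta}(t-\delta')$); alternatively, simply cite \cite[Lem.~15.9.2]{GLS} as the paper does. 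As it stands, the forward implication is not established by your proposal.
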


\begin{proof}[Proof of Lemma \ref{lem. perturbed ODE}]
Our assumption on $f$ gives rise to a decomposition such that we can write $f=f_1+f_2$ where $f_1 \in BC_0(\mathbb{R}_+;\mathbb{R})$, $f_2 \in L_{loc}^1(\mathbb{R}_+;\mathbb{R})$ and $\int_0^tf_2(s)ds \to 0$ as $t \to \infty$; see \cite[Lem. 15.9.2]{GLS} (recall that $BC_0(\mathbb{R}_+;\mathbb{R})$ is the space of all continuous functions from $\mathbb{R}^+$ to $\mathbb{R}$ which have a zero limit at infinity, and $L^1_{loc}(\mathbb{R}_+;\mathbb{R})$ is the space of all locally integrable functions from  from $\mathbb{R}^+$ to $\mathbb{R}$). Thus we can write
\begin{align*}
    u(t) & = \int_0^t e^{-(t-s)}f(s)ds\\
    & = \int_0^t e^{-(t-s)}f_1(s)ds + \int_0^t e^{-(t-s)}f_2(s)ds\\
    & = \int_0^t e^{-(t-s)}f_1(s)ds + \int_0^tf_2(s)ds + \int_0^t e^{-(t-s)}\left(\int_0^sf_2(u)du\right)ds,
\end{align*}
where the last line follows from integration by parts. As the convolution of an $L^1(\mathbb{R}_+)$ and a $BC_0(\mathbb{R}_+)$ function tends to zero as $t \to \infty$ (see \cite[Thm. 2.2.2]{GLS}) the above decomposition of $f$ ensures all three terms on the righthand side tend to zero as $t\to\infty$. 

We notice moreover that if $u(t)\to 0$ as $t\to\infty$, then $\int_t^{t+\delta} f(s)\,ds \to 0$ as $t\to\infty$ for any $\delta>0$. This is easily established by first integrating \eqref{eq. perturbed ODE} over $[t,t+\delta]$ and rearranging:
\[
\int_t^{t+\delta} f(s)\,ds=
u(t+\delta)-u(t)+\int_t^{t+\delta} u(s)\,ds.
\]
Now taking limits as $t\to\infty$, and using the fact that $u(t)\to 0$ as $t\to\infty$, we get the desired conclusion. 
\end{proof}

It is easy to adapt the proof to deal with the equation 
\[
u'(t)=-\beta u(t)+f(t), \quad t>0;\quad u(0)=0,
\]   
where $\beta>0$. Following the calculations above, we see that $u(t)\to 0$ as $t\to\infty$ if and only if $\int_t^{t+\delta} f(s)\,ds \to 0$ as $t\to\infty$ for all $\delta\in (0,1]$.

\begin{proof}[Proof of Theorem \ref{thm. mean square X to zero}]
We begin with $(\mathbf{A}) \implies (\mathbf{B})$.\\
\newline
   Let $Z(t)=(r^2 \ast \mathbb{E}[Y^2])(t)$. Then, as pointed out before we have     \begin{align*}
        \mathbb{E}[X^2(t)] & = \mathbb{E}[x^2(t)]+Z(t),\quad t \geq 0,\\
        Z(t) & = \left(r^2 \ast \mathbb{E}\left[(g+G(x_\cdot))^2\right]\right)(t)+(G^2(r_\cdot) \ast Z)(t), \quad t \geq 0.
    \end{align*}
    Define $x_1(t)=0$ for $t\leq 0$ and $x_1(t)=(r\ast f)(t)$ for $t\geq 0$.
    Then 
\begin{equation} \label{eq. x_1 dynamics}
    x'_1(t) = \int_{[-\tau,0]}x_1(t+u)\nu(du)+f(t), \quad  t > 0.   
\end{equation}
Next, let $u$ obey equation \eqref{eq. perturbed ODE}, extending $u$ to be zero on $[-\tau,0]$. Define 
\begin{equation} \label{eq. delta}
    \delta(t) \coloneqq  x_1(t)-u(t), \quad t\geq -\tau.
\end{equation}  
Then $\delta$ obeys 
\begin{align*}
    \delta'(t) & = x'_1(t)-u'(t) 
     = (x_1 \ast \nu)(t) + u(t)\\
    & = (\delta\ast\nu)(t)+(\tilde{\nu}\ast u)(t)+u(t),
\end{align*}
where $\tilde{\nu}$ is a finite measure on $[0,\infty)$ constructed from $\nu$ as in \eqref{eq. tildemu}. The condition \textbf{(A)}(iii) ensures we can apply Lemma \ref{lem. perturbed ODE} so that $u(t) \to 0$ as $ t \to \infty$: this along with $\nu$ being a finite measure ensures $v(t) \coloneqq (\tilde{\nu} \ast u)(t)+u(t) \to 0$ as $ t \to \infty$. For results regarding the convolutions of finite measures on $[0,\infty)$ with functions see \cite[Section 3.2 \& 3.6]{GLS}; these will be used extensively in this argument as well as in subsequent proofs. As $\delta$ obeys a linear functional differential equation with zero initial function, it obeys a variation of constants formula given by
\[
\delta(t)=(r \ast v )(t).
\]
 Once again appealing to the finiteness of $\nu$ along with condition \textbf{(A)}(i), we see that $\delta(t) \to 0$ as $t \to \infty$. But $x_1(t)=\delta(t)+u(t)$; hence we have shown $x_1(t) \to 0$ as $t\to\infty$. This gives  $\mathbb{E}[x^2(t)] \to 0$ as $ t \to \infty$. To see this, consider the bound
\[
\mathbb{E}[x^2(t)] \leq 2\mathbb{E}[x_0^2(t)]+ 2x_1^2(t).
\]
We have shown $x_1(t) \to 0$ as $t\to\infty$ and we know condition \textbf{(A)}(i) ensures $\mathbb{E}[x_0^2(t)]\leq Ce^{-2\alpha t}$ for some $\alpha<-v_0(\nu)$. Hence $\mathbb{E}[x^2(t)] \to 0$ as $ t \to \infty$, as claimed. Next we want to show $Z(t) \to 0$ as $t\to\infty$. Recall
\begin{equation*} 
   Z(t) = \gamma(t)+(G^2(r_\cdot) \ast Z)(t), \quad t \geq 0,
\end{equation*}
with $\gamma(t) = \left(r^2 \ast \mathbb{E}\left[(g+G(x_\cdot))^2\right]\right)(t)$. We have the immediate inequality
\[
\gamma(t) \leq \left(r^2 \ast \left(2g^2+2\mathbb{E}[G^2(x_\cdot)]\right) \right)(t).
\]
Let $u_1$ solve
\[
u'_1(t)=-2\alpha u_1(t)+g^2(t), \quad t \geq 0,
\]
where $\alpha > 0$ is chosen such that $|r(t)|\leq Ke^{-\alpha t}$ for all $t\geq 0$ and some constant $K>0$ (condition \textbf{(A)}(i) ensures we can always do this). Then condition \textbf{(A)}(iv) allows us to apply the method of Lemma \ref{lem. perturbed ODE}, with $g^2$ in the role of $f$, to ensure that $u_1(t) \to 0$ as $t\to\infty$. Thus we have
\[
(r^2 \ast g^2)(t)=\int_0^t r^2(t-s)g^2(s)ds \leq K^2 \int_0^t e^{-2\alpha(t-s)}g^2(s)ds,
\]
and noticing that the term on the righthand side is exactly $K^2 u_1(t)$ shows that $(r^2 \ast g^2)(t) \to 0$ as $t \to \infty$. Now we need only show $\mathbb{E}[G^2(x_t)] \to 0$ as $t\to\infty$ which will ensure $\gamma(t) \to 0$ as $t\to\infty$. Notice in the case when $\mu$ is zero that this is automatically true, and that we can conclude that $\gamma(t)\to 0$ as $t\to\infty$ directly. Dealing with the case of non--trivial $\mu$, observe that $G(x_t)=G([x_0]_t)+G([x_1]_t)$
which yields the inequality 
\[
G^2(x_t) \leq 2G^2([x_0]_t)+2G^2([x_1]_t).
\]
Now
\begin{equation*}
  | G([x_0]_t) | \leq  \int_{[-\tau,0]}|x_0(t+u)| |\mu|(du),
\end{equation*}
and so
\begin{align*}
  G^2([x_0]_t) & \leq \left( \int_{[-\tau,0]} |x_0(t+u)| |\mu|(du) \right)^2\\
  & = \int_{[-\tau,0]}\int_{[-\tau,0]}|x_0(t+s)|\cdot|x_0(t+u)| |\mu|(du)|\mu|(ds)\\
  & \leq \int_{[-\tau,0]}\int_{[-\tau,0]} \left(\frac{1}{2}x_0^2(t+s)+\frac{1}{2}x_0^2(t+u)\right) |\mu|(du) |\mu|(ds)\\
  & = |\mu|\left([-\tau,0]\right) \cdot \int_{[-\tau,0]}x_0^2(t+s)|\mu|(ds).
\end{align*}
Thus, by taking expectations and using the exponential estimate $\mathbb{E}[x_0^2(t)] \leq C(\psi)e^{-2\alpha t}$ implied by condition \textbf{(A)}(i), we see that
\begin{align*}
   \mathbb{E}[G^2([x_0]_t)] & \leq C(\psi)e^{-2\alpha t}|\mu|\left([-\tau,0]\right) \cdot \int_{[-\tau,0]}e^{-2\alpha s}|\mu|(ds) \\
   & \leq K(\psi)e^{-2\alpha t},
\end{align*}
for some constant $K$. Thus we have $\mathbb{E}[G^2([x_0]_t)] \to 0$ as $t\to\infty$. Next, write $\tilde{\mu}$ in terms of $\mu$ as in \eqref{eq. tildemu}. Then 
\[
G([x_1]_t) = (\tilde{\mu}\ast x_1)(t).
\]
As shown above, $x_1(t) \to 0$ as $t\to\infty$, and as $\mu\in M[0,\infty)$ is finite, we have once again that the above convolution tends to zero as $t\to\infty$. Hence $G^2([x_1]_t) \to 0$ as $t\to\infty$. This now gives us that $\gamma(t) \to 0$ as $t \to \infty$. 

Next we make use of a variation of constants formula obeyed by $Z$ (see \cite[Thm. 2.3.5]{GLS}):
\[
Z(t)=\gamma(t)+(\gamma \ast \rho)(t),
\]
where $\rho$ is defined by \eqref{eq. rho}. Conditions \textbf{(A)} (i) and (ii) satisfy the assumptions of Lemma~\ref{lem. exponential integrability of rho} and thus $\rho \in L^1(\mathbb{R}_+)$. This along with $\gamma(t) \to 0$ as $t\to\infty$ ensures that $Z(t) \to 0$ as $t \to \infty$ which completes the proof of the forward implication $(\textbf{A}) \implies (\textbf{B})$. 

In the case when $\mu$ is trivial, we note that 
\[
\mathbb{E}[X^2(t)]=\mathbb{E}[x^2(t)]+(r^2\ast g^2)(t), \quad t\geq 0.
\]
We have shown above that both terms on the right hand side tend to zero as $t\to\infty$, so  the implication $(\textbf{A}) \implies (\textbf{B})$ proven in this case also. 

We now show $(\textbf{B}) \implies (\textbf{A})$.\\
\newline
Step 1: $(\textbf{B}) \implies \textbf{(A)}(i):$\\
\newline
Choose $\psi_1 \in C([-\tau,0];\mathbb{R})$ arbitrary and deterministic and $\psi_2=0$. Equation \eqref{eq. Mean Sqaure X} immediately tells us that $\mathbb{E}[X^2(t,\psi)] \geq \mathbb{E}[x^2(t,\psi)]$ for all $\psi$. Thus for deterministic $\psi$ we have $x^2(t,\psi)\to 0$ as $t\to\infty$. In particular, $x^2(t,\psi_1) \to 0$ and $x^2(t,\psi_2) \to 0$ as $t \to \infty$. Consider now
$\tilde{x}(t,\psi) \coloneqq x(t,\psi_1)-x(t,\psi_2)$ for $t\geq -\tau$. Thus 
\begin{align*}
    \tilde{x}(t,\psi)  = x_0(t,\psi_1)-x_0(t,\psi_2)  = x_0(t,\psi_1),
\end{align*}
as $x_0(t,0)=0$ for all $t$. Hence we have
\begin{align*}
    x_0^2(t,\psi_1)=\tilde{x}^2(t,\psi) & =(x(t,\psi_1)-x(t,\psi_2))^2 \leq 2x^2(t,\psi_1)+2x^2(t,\psi_2),
\end{align*}
which guarantees $x_0(t,\psi) \to 0$ as $t\to\infty$ for all deterministic $\psi$. But as pointed out earlier, this implies that $v_0(\nu)<0$ which implies that $r(t)\to 0$ as $t\to\infty$, proving \textbf{(A)}(i).
\\
\newline
Step 2: $(\textbf{B}) \implies \textbf{(A)}(iii):$\\
\newline
Since $x(t,\psi) \to 0$ as $t\to\infty$ when $\psi =0$, we must also have $x_1(t) \to 0$. We now let $u$ solve equation \eqref{eq. perturbed ODE} and introduce $\delta$ as defined by \eqref{eq. delta}. We have 
\begin{align*}
    \delta'(t)  = (x_1\ast \nu)(t) + u(t) = (\tilde{\nu}\ast x_1)(t) + x_1(t) - \delta(t),
\end{align*}
where $\tilde{\nu}$ is constructed from $\nu$ as in \eqref{eq. tildemu}.  
If we define $v(t) := (\tilde{\nu} \ast x_1)(t) + x_1(t)$, we have that $v(t) \to 0$ as $t\to\infty$ by virtue of the fact that $x_1(t) \to 0$ as $t\to\infty$ and $\tilde{\nu}$ being finite. Thus, as $t\to\infty$ we have that
\[
\delta(t) = \int_0^t e^{-(t-s)}v(s)ds \to 0,
\]
and in turn by the definition of $\delta(t)$ we have that $u(t) \to 0$ as $t\to\infty$. By the converse half of 
Lemma~\ref{lem. perturbed ODE}, we have that $\int_t^{t+\delta} f(s)\,ds \to 0$ as $t\to\infty$ for all $\delta\in (0,1]$, and hence condition \textbf{(A)}(iii) is proven.\\
\newline
Step 3: $(\textbf{B}) \implies \textbf{(A)}(iv):$\\
\newline
Let $Z$ be defined as above. Since we have $\mathbb{E}[X^2(t)] \geq Z(t)$, we automatically have $Z(t) \to 0$ as $t\to\infty$, and in the same way we also have $\gamma(t) \to 0$ as $t\to\infty$ for all $\psi$. Now fix $\psi$ to be deterministic: then $\gamma(t)\to 0$ as $t\to\infty$ implies 
\[
\int_0^t r^2(t-s)\left(g(s)+G(x_s)\right)^2ds \to 0, \quad t\to\infty.
\]
The continuity of $r$ (and the fact $r(0)=1$) ensures that for all $k \in [0,1)$, there exists an $\eta_k$ such that $r^2(t) \geq k$ for all $t \in [0,\eta_k]$. Thus for $t\geq \eta_k$ we have 
\[
\int_{t-\eta_k}^t r^2(t-s)\left(g(s)+G(x_s)\right)^2ds \geq \int_{t-\eta_k}^t k\left(g(s)+G(x_s)\right)^2ds \geq 0.
\]
But we also have that
\[
\int_{t-\eta_k}^t r^2(t-s)\left(g(s)+G(x_s)\right)^2ds \leq \int_0^t r^2(t-s)\left(g(s)+G(x_s)\right)^2ds,
\]
so we must have
\[
\int_{t-\eta_k}^t \left(g(s)+G(x_s)\right)^2ds \to 0, \text{ as } t \to \infty.
\]
If we can select a $k \in [0,1)$ such that $\eta_k = 1$, we arrive at 
\[
\int_{t-1}^t \left(g(s)+G(x_s)\right)^2ds \to 0, \quad t\to\infty.
\]
If no such $k$ can be selected, we proceed as follows. We have for some $k \in [0,1)$ that $\eta_k <1$ (othwerwise there is nothing to prove) and that
\begin{align*}
    \int_{t-\eta_k}^t \left(g(s)+G(x_s)\right)^2ds \to 0, \quad t\to\infty.
    \end{align*} 
    Replacing $t$ by $t-\eta_k$ yields 
  \begin{align*}
  \int_{t-2\eta_k}^{t-\eta_k} \left(g(s)+G(x_s)\right)^2ds \to 0, \quad t\to\infty,
  \end{align*} 
    and combining these limits gives 
\begin{align*}
\int_{t-2\eta_k}^{t} \left(g(s)+G(x_s)\right)^2ds \to 0 \quad t\to\infty.
\end{align*}
We continue in this manner until we find an $n \in \mathbb{N}$ such that $n\eta_k>1$: this then implies 
\begin{align*}
    \int_{t-1}^t \left(g(s)+G(x_s)\right)^2ds & \leq \int_{t-n\eta_k}^t \left(g(s)+G(x_s)\right)^2ds\\
    & = \sum_{j=1}^n \int_{t-j\eta_k}^{t-(j-1)\eta_k} \left(g(s)+G(x_s)\right)^2ds.
\end{align*}
Passing to the limit we see
\[
\int_{t-1}^t \left(g(s)+G(x_s)\right)^2ds \to 0, \text{ as } t \to \infty,
\]
which holds irrespective of the value of $\eta_k$. Notice in the case when $\mu$ is trivial that $G(x_t)=0$ for all $t\geq 0$, and so \textbf{(A)} (iv) holds automatically.

We continue now in the case when $\mu$ is non--trivial, in which case we cannot expect $G(x_t)$ to automatically be zero.
We have already shown that \textbf{(B)} implies $x(t) \to 0$ as $t\to\infty$, which in turn ensures $G(x_t) \to 0$ as $t\to\infty$. Thus $\int_{t-1}^tG^2(x_s)ds \to 0$ as $t\to\infty$, so we get
\begin{equation}\label{eq.conv1}
\int_{t-1}^tg^2(s)ds + \int_{t-1}^t2g(s)G(x_s)ds \to 0, \quad t\to\infty.
\end{equation}
Note by the Cauchy--Schwarz inequality that we have
\begin{equation} \label{eq.conv2}
\left| \int_{t-1}^t2g(s)G(x_s)ds\right|^2 \leq \int_{t-1}^tg^2(s)ds\int_{t-1}^t4G^2(x_s)ds,
\end{equation}
and that the second integral on the right hand side tends to zero as $t\to\infty$. Thus, using \eqref{eq.conv1} and \eqref{eq.conv2}, for all $\epsilon>0$, there exists $T_1(\epsilon)$ and $T_2(\epsilon)$ such that for all $t> T(\epsilon):=\max\{T_1(\epsilon),T_2(\epsilon)\}$ we have
\[
\left|\int_{t-1}^tg^2(s)ds + \int_{t-1}^t2g(s)G(x_s)ds \right| < \epsilon \text{ and } \left|\int_{t-1}^t2g(s)G(x_s)ds \right| < \epsilon \cdot\sqrt{\int_{t-1}^tg^2(s)ds}.
\]
Hence for $t\geq T(\epsilon)$, we have 
\begin{align*}
    \int_{t-1}^tg^2(s)ds & = \left| \int_{t-1}^tg^2(s)ds+\int_{t-1}^t2g(s)G(x_s)ds-\int_{t-1}^t2g(s)G(x_s)ds\right|\\
    &  < \epsilon+ \epsilon \cdot\sqrt{\int_{t-1}^tg^2(s)ds}.
\end{align*}
Let $p(x)\coloneqq x^2-\epsilon x - \epsilon$. With $x\coloneqq \sqrt{\int_{t-1}^tg^2(s)ds}\geq 0$, we have $p(x)<0$. This implies 
\[
|x| < \dfrac{\epsilon+\sqrt{\epsilon^2+4\epsilon}}{2}.
\]
Hence 
\[
\int_{t-1}^tg^2(s)ds  < \dfrac{\epsilon+\sqrt{\epsilon^2+4\epsilon}}{2}, \quad t\geq T(\epsilon).
\]
Since $\epsilon$ is arbitrary, we have $\int_{t-1}^tg^2(s)ds \to 0$ as $ t \to \infty$, which is condition \textbf{(A)}(iv). \\
\newline
Step 4: $(\textbf{B}) \implies \textbf{(A)}(ii):$\\
\newline
The first case we must consider is when the measure $\mu$ is the zero measure. This gives $G(r_{\cdot}) \equiv 0$ so that $\Vert G(r_{\cdot})\Vert_{L^2(\mathbb{R}_+)}=0$ is automatically less than one, and \textbf{(A)}(ii) automatically holds. 

From here we exclude the case where $\mu$ is the zero measure. Since \textbf{(B)} holds, we have that $r(t)\to 0$ as $t\to\infty$, and indeed that $r\in L^1(\mathbb{R}_+)$. Since $\mu$ is a finite measure, we therefore have that $G(r_\cdot)$ is in $L^1(\mathbb{R}_+)$. Also, the fact that $r(t)\to 0$ as $t\to\infty$ implies that $G(r_t)\to 0$ as $t\to\infty$. Therefore, we have that $G^2(r_\cdot)\in L^2(\mathbb{R}_+)$. 
Hence $\Vert G(r_{\cdot})\Vert_{L^2(\mathbb{R}_+)}<+\infty$. On the other hand, since we are now considering non--trivial $\mu$, our earlier arguments show that  
$\Vert G(r_{\cdot})\Vert_{L^2(\mathbb{R}_+)}>0$. 

In the proof of Step 3, we deduced that $\gamma(t) \to 0$ as $t\to\infty$ so our first task will be to show that $\gamma(t)$ is in fact strictly positive on some non--trivial interval. There are two cases we must consider.\\
\newline
\textbf{Case 1}: $g(t)+G([x_1]_t) \not\equiv 0$ for all $t\geq0$.\\
\newline
Take $\psi \equiv 0$ so that
\begin{align*}
    g(t)+G(x_t)  = g(t)+G([x_0]_t)+G([x_1]_t)  = g(t) + G([x_1]_t) \eqqcolon \gamma_1(t),
\end{align*}
and hence $\gamma(t) = (r^2 \ast \gamma_1^2)(t)$ for $t \geq 0$. Note as $\gamma_1$ is continuous (and not identically equal to zero) there exists an interval $(t_1,t_2) \subset [0,\infty)$ such that $\gamma_1^2(t)\geq \eta$ for all $t \in (t_1,t_2)$ for some $\eta>0$. Let $t\geq \theta$, where $\theta>0$ will be chosen later: then
\begin{align*}
    \gamma(t)=\int_0^t r^2(t-s)\gamma_1^2(s)ds  \geq \int_{t-\theta}^tr^2(t-s)\gamma_1^2(s)ds
     \geq \inf_{u\in [0,\theta]}r^2(u) \int_{t-\theta}^t\gamma_1^2(s)ds.
\end{align*}
Now we choose $\theta$ small enough such that $t_1+\theta<t_2$
and that $\inf_{u\in [0,\theta]}r^2(u) \geq \frac{1}{2}$.
Now choose $t$ such that $t \in (t_1+\theta,t_2)$, which means we have,
\[
\gamma(t)\geq \inf_{u\in [0,\theta]}r^2(u)\cdot \eta \theta \geq \dfrac{\eta \theta}{2}.
\]

\textbf{Case 2}: $g(t)+G([x_1]_t) \equiv 0$ for all $t\geq0$.\\
\newline
Since $\mu$ is non--zero, we may choose $\psi$ so that $\int_{[-\tau,0]}\mu(dt)\psi(t) \neq 0$. This ensures $G([x_0]_0)\neq0$. Then we get
\begin{align*}
       g(t)+G(x_t) = g(t)+G([x_0]_t)+G([x_1]_t)
    & = G([x_0]_t) \eqqcolon \gamma_2(t).
\end{align*}
The continuity of $\gamma_2$ and the fact $\gamma_2(0) \neq 0$ ensures $\gamma_2^2(t)>\eta'$ for all $t \in[0,t_2')$ for some $t_2'>0$. Thus by a similar argument as in \textbf{Case 1} we must have $\gamma$ strictly positive on some non--trivial interval.\\
\newline
Thus we have concluded when $\mu \not\equiv 0$, we can find a deterministic $\psi$ such that there exists an $\tilde{\eta} >0$ and an interval $(t_1',t_2')$ so that $\gamma(t,\psi)\geq \tilde{\eta} >0$ for all $t\in (t_1',t_2')$. Recall  $Z(t)=\gamma(t)+(\gamma \ast \rho)(t)$, where $\rho$ is defined as in \eqref{eq. rho}. Now suppose $\Vert G(r_{\cdot})\Vert_{L^2(\mathbb{R}_+)} \geq 1$: by the Renewal Theorems 3.1.4 and 3.1.5 in Alsmeyer \cite{Als}, there exists a $\lambda \geq 0$ such that $\rho(t)/e^{\lambda t} \to c>0$ as $t \to \infty$. Let $T>0$ be arbitrary and choose $t\geq T$. Then
\begin{align*}
    \dfrac{Z(t)}{e^{\lambda t}}  \geq \frac{1}{e^{\lambda t}}\int_0^T \rho(t-s)\gamma(s)ds
     = \int_0^T \left[ \frac{\rho(t-s)}{e^{\lambda(t-s)}}-c\right]e^{-\lambda s}\gamma(s) ds+ c\int_0^T e^{-\lambda s}\gamma(s) ds.
\end{align*}
Hence
\[
\liminf_{t \to \infty}e^{-\lambda t}Z(t) \geq c\int_0^T e^{-\lambda s}\gamma(s) ds.
\]
By hypothesis, $Z(t) \to 0$ as $t\to\infty$. Using this and the fact that $c>0$, we must have
\[
\int_0^T e^{-\lambda s}\gamma(s) ds=0, \text{ for all } T>0.
\]
But since $\gamma$ is strictly positive on a non--trivial interval, the above integral cannot be zero and hence we reach our desired contradiction. Thus $\Vert G(r_{\cdot})\Vert_{L^2(\mathbb{R}_+)} < 1$ which proves condition \textbf{(A)}(ii) and hence the reverse implication \textbf{(B)} implies \textbf{(A)}. Since we already proved that \textbf{(A)} implies \textbf{(B)}, the proof is complete.
\end{proof}

\begin{proof}[Proof of Theorem \ref{thm. mean square X exponential decay}]
 We first show $(\textbf{A}) \implies (\textbf{B})$.\\
\newline
With $Z(t,\psi)$ defined as in the proof of Theorem \ref{thm. mean square X to zero} we have $\mathbb{E}[X^2(t)]=\mathbb{E}[x^2(t)]+Z(t)$. The first object we study is $\mathbb{E}[x^2(t)]$, which can be estimated by
$\mathbb{E}[x^2(t)] \leq 2\mathbb{E}[x_0^2(t)]+2x_1^2(t)$.
Condition \textbf{(A)}(i) ensures $\mathbb{E}[x_0^2(t)]\leq C(\psi)e^{-2\alpha t}$ where $\alpha>0$ is such that $|r(t)|\leq Ke^{-\alpha t}$ for $t\geq 0$ and some constant $K>0$. Next we let $u$ be the solution of
\begin{equation} \label{eq.uinexpthm}
u'(t)=-\beta_2u'(t)+f(t), \quad t \geq 0,
\end{equation}
with $u(0)=0$, where $\beta_2$ is chosen such that condition \textbf{(A)}(iv) holds. We can now estimate $u(t)$ for $t\geq 0$:
\begin{align*}
    |u(t)| \leq e^{-\beta_2t}\left| \int_0^t e^{\beta_2 s}f(s)ds\right|  \leq C(f) e^{-\beta_2t}.
\end{align*}
Define $\delta=x_1-u$ as in \eqref{eq. delta}, where $u$ is understood now to solve \eqref{eq.uinexpthm}. Then $\delta(t)=0$ for $t\leq 0$. Write $\tilde{\nu}$ in terms of $\nu$ as in \eqref{eq. tildemu} so that  
\begin{align*}
    \delta'(t)  = (x_1\ast \nu)(t)+\beta_2u(t)
    = (\delta\ast \nu)(t)+(\tilde{\nu} \ast u)+\beta_2u(t).
\end{align*}
With $v(t):= (\tilde{\nu}\ast u)+\beta_2u(t)$, $\delta$ obeys the variation of constants formula 
\begin{align*}
  \delta(t)  = \int_0^t v(t-s)r(s)ds, \quad t\geq 0.
\end{align*}
 Observe that $v$ depends only on $u$ and $\nu$ and so it too obeys an exponential estimate of the form $|v(t)|\leq C(f)e^{- \beta_2t}$ for $t\geq 0$. Combining this with the  exponential estimate on $r$ and the equation above for $\delta$ we obtain
\begin{align*}
    \left|\delta(t)\right| \leq C(f)Ke^{-\min(\alpha-\epsilon,\beta_2) t}, \quad t\geq 0,
\end{align*}
for arbitrarily small $\epsilon<\alpha$. This estimate follows from 
\[
|\delta(t)|\leq C(f)K \int_0^t e^{-\beta_2(t-s)} e^{-\alpha s}\,ds,
\]
and by estimating the integral in the cases $\alpha\geq \beta_2$ and $\alpha <\beta_2$.  Now using equation \eqref{eq. delta} we can combine our estimates on $\delta$ and $u$ to get an estimate on $x_1$. Thus we have proved the estimate
\[
|x_1(t)|\leq C'(f)e^{-\min(\alpha-\epsilon,\beta_2) t},\quad t\geq 0.
\]
Notice that this also gives the estimate 
\[
|G([x_1]_t)|\leq C_2(f) e^{-\min(\alpha-\epsilon,\beta_2) t},\quad t\geq 0.
\]
Putting together the estimates for $x_1$ and $\mathbb{E}[x_0^2]$ yields  
\[
\mathbb{E}[x^2(t,\psi)]\leq C(f,\psi)e^{-2\min(\alpha-\epsilon,\beta_2)t}, \quad t\geq 0.
\]

Next we focus on $Z(t)$, let $\rho$ and $\gamma$ be defined as in \eqref{eq. rho} and \eqref{eq. gamma} respectively.
We first estimate $\gamma$:
\begin{equation} \label{eq.gammaexpest}
    \gamma(t) \leq 2\int_0^t r^2(t-s)g^2(s)ds+2\int_0^tr^2(t-s)\mathbb{E}[G^2(x_s)]ds.
\end{equation}
Considering the first term on the righthand side of \eqref{eq.gammaexpest}, condition \textbf{(A)}(iii) ensures there exists a $\beta_1$ such that
\[
\int_0^\infty e^{2\beta s}g^2(s)ds< \infty, \text{ for all } \beta \leq \beta_1.
\]
Thus 
\begin{equation} \label{eq.r^2astg^2exp}
\int_0^t r^2(t-s)g^2(s)ds \leq K^2 e^{-2\alpha t}\int_0^t e^{2\alpha s}g^2(s)\,ds.
\end{equation}
When $\alpha\leq \beta_1$, the integral is uniformly bounded, and the first term grows no faster than $e^{-2\alpha t}$. When 
$\alpha>\beta_1$, we have 
\begin{align*}
\int_0^t r^2(t-s)g^2(s)ds &\leq K^2 e^{-2\alpha t} \int_0^t e^{2\alpha s}g^2(s)\,ds 
= K^2 e^{-2\alpha t} \int_0^t e^{2(\alpha-\beta_1)s} e^{2\beta_1  s}g^2(s)\,ds \\
&\leq K^2 e^{-2\alpha t} e^{2(\alpha-\beta_1)t} \int_0^t  e^{2\beta_1  s}g^2(s)\,ds
\leq K^2 e^{-2\alpha t} e^{-2\beta_1 t} \int_0^\infty  e^{2\beta_1  s}g^2(s)\,ds,
\end{align*}
so that the integral is $O(e^{-2\beta_1 t})$ as $t\to\infty$. Thus, we have that $(r^2\ast g^2)(t) = O(e^{-2\min(\alpha,\beta_1)t})$ as $t\to\infty$. 

At this moment, we have enough information to conclude the proof in the case when $\mu$ is zero, so we halt the general argument to dispense with this trivial case. We have already obtained the estimate 
\[
\mathbb{E}[x^2(t,\psi)]\leq C(f,\psi)e^{-2\min(\alpha-\epsilon,\beta_2)t}, \quad t\geq 0,
\]
and we have just shown that $(r^2\ast g^2)(t) \leq C(g) e^{-2\min(\alpha,\beta_1)t}$. Therefore 
\[
\mathbb{E}[X^2(t)]=\mathbb{E}[x^2(t)]+(r^2\ast g^2)(t)
\leq C(f,g,\psi)e^{-2\min(\alpha-\epsilon,\beta_2,\beta_1)t}, \quad t\geq 0.
\]
The exponent on the righthand side depends on $f$ and $g$ through $\beta_2$ and $\beta_1$, but there is no dependence in the exponent in $\psi$.
 
In the rest of the proof, we concentrate on the case where $\mu$ is non--trivial.

To control the second term on the righthand side of \eqref{eq.gammaexpest} we need an estimate on $\mathbb{E}[G^2(x_t)]$; one can get this by following an identical argument in the proof of Theorem~\ref{thm. mean square X to zero}. We have 
\begin{align*}
\mathbb{E}[G^2([x_0]_t)]  \leq K(\psi)e^{-2\alpha t}, \quad t\geq 0.
\end{align*}
Using the estimate earlier obtained for $G([x_1]_t)$, we get 
\[
G^2([x_1]_t)\leq C_2^2(f) e^{-2\min(\alpha-\epsilon,\beta_2) t},\quad t\geq 0.
\]
This implies that 
\[
\mathbb{E}[G^2(x_t)]\leq C(f,\psi) e^{-2\min(\alpha-\epsilon,\beta_2) t}, \quad t\geq 0.
\]
Therefore
\begin{align*}
\int_0^t r^2(t-s) \mathbb{E}[G^2(x_s)]\,ds 
&\leq C(f,\psi) K^2 \int_0^t e^{-2\alpha(t-s)} e^{-2\min(\alpha-\epsilon,\beta_2) s}\,ds\\
&\leq C'(f,\psi) e^{-2\min(\alpha-\epsilon,\beta_2) t}.
\end{align*}
Hence we have
\[
|\gamma(t)| \leq C(f,g,\psi)e^{-2\alpha(f,g)t}, \quad t\geq 0,
\]
where $\alpha(f,g):=\min(\alpha-\epsilon,\beta_1,\beta_2)$. The $f$ and $g$ dependence here comes from the $f$ and $g$ dependence on $\beta_2$ and $\beta_1$ respectively. 

Since we are now tackling the case when $\mu$ is non--zero, we note that conditions 
 \textbf{(A)}(i) and \textbf{(A)}(ii) can be used to apply Lemma~\ref{lem. exponential integrability of rho}, so we have
\[
\int_0^{\infty}e^{2\lambda s}\rho(s)ds =  \dfrac{\int_0^{\infty}G_\lambda^2(r_s)ds}{1-\int_0^{\infty}G_\lambda^2(r_s)ds} \eqqcolon K',
\]
which is finite for any $\lambda \in [0,\alpha')$ where $\Gamma(\alpha')=1$.
Hence set $\lambda:=\min\{\alpha(f,g),\alpha'-\epsilon\}$ for an arbitrarily small $\epsilon$. Here the choice of $\lambda$ clearly depends on $f$ and $g$, while $\alpha'$ depends on $\nu$ and $\mu$, but not on $\psi$. Since $\lambda\leq \alpha$, we have
\begin{align*}
    Z(t)& =\int_0^t\gamma(t-s)\rho(s)ds+\gamma(t)\\ 
    & \leq C(f,g,\psi)\int_0^t e^{-2\lambda(t-s)}\rho(s)ds+C(f,g,\psi)e^{-2\lambda t}\\
    & = C(f,g,\psi)e^{-2\lambda t}\left(\int_0^\infty e^{2\lambda s}\rho(s)ds+1 \right)\\
    & \leq C'(f,g,\psi)e^{-2\lambda t},
\end{align*}
where the finiteness of the integral at the penultimate step follows from $\lambda<\alpha'$. Lastly, we have 
\[
\mathbb{E}[X^2(t)]=\mathbb{E}[x^2(t)]+Z(t)
\leq C(f,\psi)e^{-2\min(\alpha-\epsilon,\beta_2)t}+ C'(f,g,\psi)e^{-2\lambda t}
\leq C''(f,g,\psi)e^{-2\lambda t}.
\]  
Since $\lambda>0$ depends on $f$ and $g$, but not on $\psi$, we have completed the proof that \textbf{(A)} implies \textbf{(B)}. \\
\newline
We now show $(\textbf{B}) \implies (\textbf{A}):$\\
\newline
Step 1: $(\textbf{B}) \implies$ \textbf{(A)}(i) and \textbf{(A)}(ii):\\
\newline
By hypothesis we have $\mathbb{E}[X^2(t,\psi)] \to 0$ as $t\to\infty$ for all $\psi$ and so Theorem \ref{thm. mean square X to zero} implies conditions \textbf{(A)}(i) and \textbf{(A)}(ii).\\
\newline
Step 2: $(\textbf{B}) \implies$ \textbf{(A)}(iv):\\
\newline
Using the fact that $\mathbb{E}[X^2(t,\psi)] \geq \mathbb{E}[x^2(t,\psi)]$, if we choose $\psi$ to be deterministic we obtain $x^2(t,\psi)\leq C(f,g,\psi)e^{-2\alpha t}$ for all $t\geq 0$ (here $\alpha=\alpha(f,g)$). Using equation \eqref{eq. x in terms of x_0 and x_1} and setting $\psi=0$ gives
$x_1^2(t)=x^2(t,0)\leq C(f,g,0)e^{-2\alpha t}$ for $t\geq 0$, 
which implies $|x_1(t)| \leq Ce^{-\alpha t}$ for $ t\geq 0$. 
Let $u$ be the solution to
\[
u'(t)=-\beta_2 u(t)+f(t), \quad t \geq 0,
\]
with $u(t)=0$ for all $t\leq 0$ and  $\beta_2 \in (0,\alpha)$.
Extend $x_1$ to be zero for $t<0$ and define $\delta$ as in \eqref{eq. delta}. Using the fact that $x_1$ obeys \eqref{eq. x_1 dynamics}, we see that for $t>0$
\begin{align*}
    \delta'(t) =\beta_2u(t)+( x_1\ast \nu)(t)
     =-\beta_2\delta(t)+\beta_2x_1(t)+(x_1\ast \nu)(t),
\end{align*}
Define $v(t):= \beta_2x_1(t)+(x_1\ast \nu)(t)$ for $t\geq 0$. We note $|v(t)| \leq Ce^{-\alpha t}$ for all $t \geq 0$ by virtue of the estimate on $x_1$ above and $\nu$ being finite. Solving for the above equation for $\delta$ gives
\[
|\delta(t)| =\left| \int_0^t e^{-\beta_2(t-s)}v(s)ds\right| \leq Ce^{-\beta_2t}\int_0^\infty e^{-(\alpha-\beta_2)s}ds.
\]
The integral on the righthand side is finite as $\beta_2 \in (0,\alpha)$. Thus we have $|\delta(t)| \leq C'e^{-\beta_2t}$ for all $t\geq 0$. Once again using \eqref{eq. delta} and the estimates obtained for $x_1$ and $\delta$ we see that $|u(t)| \leq (C+C') e^{-\beta_2t}$ for $t\geq 0$. But
\[
u(t) = \int_0^t e^{-\beta_2(t-s)s}f(s)ds,
\]
and so
\begin{align*}
    \left| \int_0^t e^{-\beta_2(t-s)}f(s)ds \right| \leq (C+C') e^{-\beta_2t}, \quad t\geq 0
    \end{align*}
    which implies 
   \begin{align*} 
    \left| \int_0^t e^{\beta_2s}f(s)ds \right| \leq C+C'=:B, \quad t\geq 0.
\end{align*}
Thus condition \textbf{(A)}(iv) is proven.\\
\newline
Step 3: $\textbf{(B)} \implies$ \textbf{(A)}(iii):\\
\newline
Take $\psi$ to be deterministic. From \textbf{(B)}, and using definitions \eqref{eq. Z} and \eqref{eq. Z in terms of gamma}, we have straight away that $Ce^{-2\alpha t} \geq 
\mathbb{E}[X^2(t,\psi)] \geq Z(t,\psi) \geq \gamma(t,\psi)$ for $t\geq 0$, where $\alpha=\alpha(f,g)>0$. Since $\psi$ is deterministic, this gives  
\[
\int_0^t r^2(t-s) \left( g(s)+G(x_s)\right)^2ds \leq C e^{-2\alpha t}, \quad t\geq 0.
\]
The continuity of $r$ and the fact $r(0)=1$ ensure that there exists an $\eta \in (0,1)$ such that $r^2(t) \geq \frac{1}{2}$ for all $t \in [0,\eta)$. Thus for $t \geq \eta$, we have 
\begin{align*}
    C e^{-2\alpha t}  \geq \int_0^t r^2(t-s) \left( g(s)+G(x_s)\right)^2ds
     \geq \frac{1}{2} \int_{t-\eta}^t \left( g(s)+G(x_s)\right)^2ds.
\end{align*}
Now let $m \in \mathbb{N}$ be the minimal integer such that $m\eta\geq1$. Thus for all $t \geq m\eta \coloneqq T'$ we have
\begin{align*}
    \int_{t-1}^t \left( g(s)+G(x_s)\right)^2ds & \leq \int_{t-m\eta}^t \left( g(s)+G(x_s)\right)^2ds
     = \sum_{j=0}^{m-1} \int_{t-(j+1)\eta}^{t-j\eta} \left( g(s)+G(x_s)\right)^2ds\\
    & \leq \sum_{j=0}^{m-1} 2C e^{-2\alpha (t-j\eta)}
     = C'e^{-2\alpha t}.
\end{align*}
We have $T' \in[1,2)$. To see this, note that $(m-1)\eta < 1$ so $T'= m\eta<1+\eta<2$. Thus the above estimate holds for all $t \geq T'$ and therefore for all $t \geq 1$ (modulo an alternative constant $C''$). In other words, we have obtained the estimate  
\[
 \int_{t-1}^t \left( g(s)+G(x_s)\right)^2ds \leq C'' e^{-2\alpha(f,g) t}, \quad t\geq 1.
\]
Next, since $Ce^{-2\alpha t}\geq \mathbb{E}[X^2(t,\psi)]\geq x^2(t,\psi)$ for $t\geq 0$, we have $|x(t,\psi)|\leq \sqrt{C}e^{-\alpha t}$ for $t\geq 0$. Thus, as $\mu$ is a finite measure on $[-\tau,0]$, $G(x_t)$ inherits an exponential estimate from $x(t)$, so that $|G(x_t)| \leq K'e^{- \alpha t}$ for some $K'>0$. Next for $t\geq 1$, we get 
\begin{align*}
    \int_{t-1}^tg^2(s)ds+\int_{t-1}^t2g(s)G(x_s)ds & \leq \int_{t-1}^tg^2(s)ds+\int_{t-1}^t2g(s)G(x_s)ds+\int_{t-1}^tG^2(x_s)ds\\
    \leq C''e^{-2\alpha t}.
\end{align*}
Clearly this yields
\[
\int_{t-1}^tg^2(s)ds-\left|\int_{t-1}^t2g(s)G(x_s)ds\right| \leq \int_{t-1}^tg^2(s)ds+\int_{t-1}^t2g(s)G(x_s)ds
\leq C''e^{-2\alpha t}.
\]
On the other hand, by the Cauchy--Schwarz inequality, and using the exponential estimate for $|G(x_t)|$, we get 
\begin{align*}
    \left| \int_{t-1}^t2g(s)G(x_s)ds \right|^2  \leq 4\int_{t-1}^tg^2(s)ds \cdot \int_{t-1}^tG^2(x_s)ds
    \leq K_2 e^{- 2\alpha t}\int_{t-1}^tg^2(s)ds,
\end{align*}
for some constant $K_2>0$. Taking the last two estimates together, this implies
\[
\int_{t-1}^tg^2(s)ds \leq C''e^{- 2\alpha t}+ \sqrt{K_2e^{- 2\alpha t}\int_{t-1}^tg^2(s)ds}, \quad t\geq 1.
\]
Write $A \coloneqq C''e^{-2\alpha t}$,  $B \coloneqq \sqrt{K_2 e^{-2\alpha t}}$, and consider $p(x)\coloneqq x^2-Bx-A$ for $x\geq 0$. Putting  $x \coloneqq \sqrt{\int_{t-1}^tg^2(s)ds}\geq 0$, we see that $p(x)\leq 0$. 
In general, the constraints $x\geq 0$ and $p(x)\leq 0$ imply 
\[
0 \leq x \leq \dfrac{B+\sqrt{B^2+4A}}{2}.
\]
Hence there is a $C_3>0$ such that
\[
\int_{t-1}^tg^2(s)ds \leq C_3e^{-2\alpha t}, \quad t\geq 1.
\]
Now let $\beta_1 < \alpha$. Then for $t\geq 1$
\begin{align*}
    \int_{t-1}^t e^{2\beta_1 s}g^2(s)ds  \leq e^{2\beta_1 t}\int_{t-1}^tg^2(s)ds \leq C_3e^{-2t(\alpha-\beta_1)}.
\end{align*}
In particular for any $n \in \mathbb{N}$,
\[
\int_{n-1}^n e^{2\beta_1 s}g^2(s)ds \leq C''e^{-2n(\alpha-\beta_1)}.
\]
Therefore 
\begin{align*}
    \int_0^\infty e^{2\beta_1 s}g^2(s)ds & \leq \sum_{n=1}^\infty C_3e^{-2n(\alpha-\beta_1)} 
    = C_3 \dfrac{e^{-2(\alpha-\beta_1)}}{1-e^{-2(\alpha-\beta_1)}}.
\end{align*}
Hence $\int_0^\infty e^{2\beta_1 s}g^2(s)ds < \infty$ for all $\beta_1 \in (0,\alpha)$. This completes the proof that \textbf{(B)} implies \textbf{(A)}, and hence both implications are proven.
\end{proof}

\begin{proof}[Proof of Theorem~\ref{thm. mean square X in L1}]

 We first show $(\textbf{A}) \implies (\textbf{B})$.\\
Using equation \eqref{eq. Z}, we need only show $\mathbb{E}[x^2], Z\in L^1(\mathbb{R}_+)$. Equation \eqref{eq. x in terms of x_0 and x_1} implies
\[
\mathbb{E}[x^2(t)] \leq 2\mathbb{E}[x_0^2(t)]+2x_1^2(t).
\]
Condition \textbf{(A)}(i) ensures $\mathbb{E}[x_0^2] \in L^1(\mathbb{R}_+)$, so we need only focus on $x_1$. Let $u$ be the solution to the differential equation in Lemma~\ref{lem. perturbed ODE}, and let $\delta$ be defined by \eqref{eq. delta} as usual. Condition \textbf{(A)} (iii) ensures $u \in L^2(\mathbb{R}_+)$. We have that $\delta'(t)=( \delta\ast \nu)(t)+v(t)$ for $t>0$ where $v=\tilde{\nu}\ast u+u$, $\delta(t)=0$ for $t\leq 0$, and $\tilde{\nu}$ is defined from $\nu$ according to \eqref{eq. tildemu}. Since $\tilde{\nu}$ is finite, $v\in L^2(\mathbb{R}_+)$. Since $\delta=r\ast v$, and $r\in L^1(\mathbb{R}_+)$ by \textbf{(A)}(i), it follows that $\delta \in L^2(\mathbb{R}_+)$. Thus by \eqref{eq. delta} we have $x_1 \in L^2(\mathbb{R}_+)$ which implies $\mathbb{E}[x^2] \in L^1(\mathbb{R}_+)$. 

We concentrate first on the case when $\mu$ is non--trivial.
Consider $Z$, recalling that if we let $\gamma$ and $\rho$ be defined as in \eqref{eq. gamma} and \eqref{eq. rho} respectively, then $Z(t)=\gamma(t)+(\gamma \ast \rho)(t)$ for $t \geq 0$.
Conditions \textbf{(A)}(i) and \textbf{(A)}(ii) ensure Lemma \ref{lem. exponential integrability of rho} is applicable and thus $\rho \in L^1(\mathbb{R}_+)$. We need only show $\gamma \in L^1(\mathbb{R}_+)$ in order to show $Z\in L^1(\mathbb{R}_+)$. We have that
\[
\gamma(t) \leq 2(r^2 \ast g^2)(t) + 2(r^2 \ast \mathbb{E}[G^2(x)])(t).
\]
Conditions \textbf{(A)}(i) and \textbf{(A)}(iv) imply the first term on the righthand side is in $L^1(\mathbb{R}_+)$: hence if we show $\mathbb{E}[G^2(x_\cdot)] \in L^1(\mathbb{R}_+)$, we are done. We estimate $\mathbb{E}[G^2(x_t)]$ as before, according to
\begin{align*}
  G^2(x_t) & \leq \int_{[-\tau,0]}\int_{[-\tau,0]} |x(t+s)||x(t+u)||\mu|(du)|\mu|(ds)\\
  & \leq \int_{[-\tau,0]}\int_{[-\tau,0]} \left(\frac{1}{2}x^2(t+s)+\frac{1}{2}x^2(t+u)\right) |\mu|(du) |\mu|(ds)\\
  &=|\mu|([-\tau,0]) \cdot \int_{[-\tau,0]} x^2(t+s)|\mu|(ds).
\end{align*}
Taking expectations gives 
\[
 \mathbb{E}[G^2(x_t)] \leq|\mu|([-\tau,0]) \cdot \int_{[-\tau,0]} \mathbb{E}[x^2(t+s)]|\mu|(ds).
\]
Since $\mathbb{E}[x^2] \in L^1(\mathbb{R}_+)$ and $|\mu|$ is a finite measure, we have  $\mathbb{E}[G^2(x_\cdot)] \in L^1(\mathbb{R}_+)$. This completes the proof of the forward implication when $\mu$ is non--trivial.

In the case when $\mu=0$, we have 
\[
\mathbb{E}[X^2(t)]=\mathbb{E}[x^2(t)]+(r^2\ast g^2)(t), \quad t\geq 0.
\]
The argument at the start of the proof guarantees that
$\mathbb{E}[x^2]\in L^1(\mathbb{R}_+)$; on the other hand,  
conditions \textbf{(A)}(i) and \textbf{(A)}(iv) imply the second term on the righthand side is in $L^1(\mathbb{R}_+)$, as above. Thus we have shown that (\textbf{B}) implies (\textbf{A}) in the case of trivial $\mu$ also.
\\
\newline
We now show $(\textbf{B}) \implies (\textbf{A}):$\\
\newline
Step 1: $(\textbf{B}) \implies$ (\textbf{A})(i):\\
\newline
Recall that $\mathbb{E}[X^2(t,\psi)] \geq \mathbb{E}[x^2(t,\psi)]$ for all $t\geq 0$ for all $\psi$. If we take $\psi$ to be deterministic then (\textbf{B}) implies 
\[
\int_0^\infty x^2(t,\psi)dt < \infty.
\]
Using \eqref{eq. x in terms of x_0 and x_1} we have that $x(t,0)=x_1(t)$, so by the last inequality $x_1 \in L^2(\mathbb{R}_+)$. On the other hand, since $x(t,\psi)-x(t,0)=x_0(t,\psi)$, we have the estimate
\begin{align*}
    x_0^2(t,\psi)  = (x(t,\psi)-x(t,0))^2 \leq 2x^2(t,\psi)+2x^2(t,0), 
\end{align*}
and so $\int_0^\infty x_0^2(t,\psi)dt < \infty$ for all $\psi$. Now let $\lambda\in \Lambda$ be such that $\text{Re}(\lambda)=v_0(\nu)$, and pick $\psi(t)=\text{Re}(e^{\lambda t})$ for $t\in[-\tau,0]$. Then 
$x_0(t,\psi)=\text{Re}(e^{\lambda t})$ for $t\geq 0$. Let $\theta=\text{Im}(\lambda)$. Then $x_0(t,\psi)=e^{v_0(\nu)t}\cos(\theta t)$ for $t\geq 0$. Hence
for arbitrary $t\geq 0$ we have
\[
\int_0^t x^2_0(s,\psi)\,ds = \int_0^t e^{2v_0(\nu)s}\cos^2(\theta s)\,ds. 
\] 
Suppose that $v_0(\nu)\geq 0$. Then the above integral diverges as $t\to\infty$, which contradicts the fact that 
$\int_0^\infty x_0^2(t,\psi)dt < \infty$ for all $\psi$. Therefore, we must have $v_0(\nu)<0$, which implies that 
 $r \in L^2(\mathbb{R}_+) $, as required.\\
\newline
Step 2: $(\textbf{B}) \implies$ \textbf{(A)}(iii):\\
\newline
Let $u$ be the solution to the differential equation in 
Lemma~\ref{lem. perturbed ODE}. Then defining $\delta$ as in \eqref{eq. delta}, and $\tilde{\nu}$ from $\nu$ as in \eqref{eq. tildemu}, it follows that
\begin{align*}
    \delta'(t)
     =-\delta(t)+x_1(t)+(\tilde{\nu} \ast x_1)(t), \quad t>0.
\end{align*}
Note also that $\delta(t)=0$ for $t\leq 0$. 
If we define $v(t) := x_1(t)+(\tilde{\nu}\ast x_1)(t)$ for $t\geq 0$, we have that $v \in L^2(\mathbb{R}_+)$ by virtue of the fact $\tilde{\nu}$ is a finite measure and $x_1 \in L^2(\mathbb{R}_+)$ (which we proved in Step 1 above). Since $\delta(t) = \int_0^t e^{-(t-s)}v(s)ds$ for $t\geq 0$, we also have that $\delta \in L^2(\mathbb{R}_+)$. Thus equation \eqref{eq. delta} implies $u \in L^2(\mathbb{R}_+)$ and since 
$u(t)=\int_0^t e^{-(t-s)}f(s)\,ds$, condition \textbf{(A)}(iii) is proven.\\
\newline
Step 3: $(\textbf{B}) \implies$ \textbf{(A)}(iv):\\
\newline
Once again let $\psi$ be deterministic. We have immediately that $\mathbb{E}[X^2(t,\psi)]\geq Z(t,\psi)$ for $t\geq 0$ and so $Z \in L^1(\mathbb{R}_+)$. Additionally, by the definition of $Z$ along with equation \eqref{eq. Z in terms of gamma} this forces $\gamma \in L^1(\mathbb{R}_+)$. Define 
\[
A(t)=\int_0^t \left(g(s)+G(x_s)\right)^2ds, \quad t\geq 0.
\]
Integrating equation \eqref{eq. gamma}, and using Fubini's theorem, we see
\begin{align*}
   \int_0^T \gamma(t)\,dt  = \int_0^T \int_0^t r^2(s)\left(g(t-s)+G(x_{t-s})\right)^2\,ds\,dt
    =  \int_0^T A(T-s) r^2(s)\,ds.
\end{align*}
Therefore we have that there is a $B>0$ such that
\[
\int_0^t A(t-s) r^2(s)\,ds \leq B, \quad t\geq 0.
\]
We need to show that $A$, which is non--negative and non--decreasing, tends to a finite limit; we already know 
from Step 1 that $r\in L^2(\mathbb{R}_+)$. Suppose to the contrary that $A(t)\to\infty$ as $t\to\infty$. Then for every $M>0$, there is a $T(M)>0$ such that for $t\geq T(M)$, $A(t)\geq M$. Now, for $t\geq T(M)$, we have 
\[
\int_0^t r^2(t-s)A(s)\,ds \geq \int_T^t r^2(t-s)\,ds \cdot M
=\int_{0}^{t-T} r^2(u)\,du \cdot M.
\]  
Since $r(0)=1$ and $r$ is continuous, $\int_0^\infty r^2(s)\,ds >0$, and we have
\[
B\geq \liminf_{t\to\infty} \int_0^t r^2(t-s)A(s)\,ds \geq 
\int_{0}^\infty r^2(u)\,du \cdot M.
\] 
Since $M>0$ is arbitrary, we may let $M\to\infty$, which gives $+\infty > B=\infty$, a contradiction. Therefore, we must have that $A$ tends to a finite limit, which is nothing other than
\[
 \int_0^\infty \left(g(s)+G(x_s)\right)^2 ds < \infty.
\]
We notice now in the case when $\mu$ is zero that $G(x_\cdot)=0$, so we have $g\in L^2(\mathbb{R_+})$ as required. 

The rest of the proof is devoted to the case when $\mu$ is non-trivial. By the above integrability, there exists a $C>0$, independent of $T>0$, such that 
\[
 \int_0^Tg^2(s)ds+2 \int_0^Tg(s)G(x_s)ds+ \int_0^TG^2(x_s)ds \leq C, \quad T >0.
\]
This inequality then implies
\[
\int_0^Tg^2(s)ds-\left|\int_{0}^T2g(s)G(x_s)ds\right| \leq \int_{0}^Tg^2(s)ds+\int_{0}^T2g(s)G(x_s)ds \leq C,
\]
for all $T >0$. By the Cauchy--Schwarz inequality, we have 
\begin{align*}
    \left| \int_{0}^T2g(s)G(x_s)ds \right|^2  \leq 4\int_{0}^Tg^2(s)ds \cdot \int_{0}^T G^2(x_s)ds 
\end{align*}
Next, recall that we have shown that $x\in L^2(\mathbb{R}_+)$ in Step 1 above. Since $\mu$ is a finite measure, we have that $G(x_\cdot) \in L^2(\mathbb{R}_+)$ also. Therefore, there exists $C'>0$ such that 
\[
\left| \int_{0}^T2g(s)G(x_s)ds \right|^2  \leq C'\int_{0}^Tg^2(s)ds, \quad T>0,
\]
which leads to
\[
\int_0^Tg^2(s)ds \leq C + \sqrt{C'\int_{0}^Tg^2(s)ds}, \quad T\geq 0.
\]
Define $p(x) \coloneqq x^2-\sqrt{C'}x-C$ for $x\in \mathbb{R}$. The inequality $p(x)\leq 0$ is satisfied for  $x\geq 0$ provided
\[
0 \leq x \leq \dfrac{\sqrt{C'}+\sqrt{C'+4C}}{2}.
\]
Therefore, as  $p\left(\sqrt{\int_{0}^Tg^2(s)ds}\right)\leq 0$,
	we have
\[
\int_{0}^Tg^2(s)ds \leq \left[ \dfrac{\sqrt{C'}+\sqrt{C'+4C}}{2} \right]^{2}, \quad T>0,
\]
which implies that $g\in L^2(\mathbb{R}_+)$, since the bound on the right hand side is independent of $T$. Hence we have shown condition \textbf{(A)}(iv), as required.\\
\newline
Step 4: $(\textbf{B}) \implies$ \textbf{(A)}(ii):\\
\newline
Suppose finally, by way of contradiction, that $\Vert G(r_{\cdot})\Vert_{L^2(\mathbb{R}_+)}^2 \geq 1$. Take $\psi$ deterministic and arbitrary. We know $\mathbb{E}[X^2] \in L^1(\mathbb{R}_+)$ forces both $Z$ and $\gamma$ to be  in $L^1(\mathbb{R}_+)$. Thus integrating equation \eqref{eq. Z in terms of gamma}, using Fubini's theorem and the fact $r(t)=0$ for all $t <0$ we see that
\begin{equation}\label{eq.Zintcontra}
\int_0^\infty Z(s)ds = \int_0^\infty \gamma(s)ds+\int_0^\infty G^2(r_u)du\cdot \int_0^\infty Z(s)ds.
\end{equation}
If $\Vert G(r_{\cdot})\Vert_{L^2(\mathbb{R}_+)}^2=1$, this implies $\int_0^\infty \gamma(s)ds=0$. Recall the argument from the proof of Theorem~\ref{thm. mean square X to zero} which showed one can always choose a deterministic $\psi$ such that there exists an non-trivial interval where $\gamma(t)$ is non zero, provided $\mu$ is not the zero measure (which is impossible in this case, since we have assumed $\Vert G(r_{\cdot})\Vert_{L^2(\mathbb{R}_+)}^2>0$). Choosing such a $\psi$ means that $\int_0^\infty \gamma(s)ds \neq 0$, forcing a contradiction.\\
If $b^2:=\Vert G(r_{\cdot})\Vert_{L^2(\mathbb{R}_+)}^2>1$ once again choose a $\psi$ such that $\gamma$ is strictly positive
on a non--trivial interval. Then $\int_0^\infty \gamma(s)\,ds >0$. Moreover, since $Z(t)\geq \gamma(t)$ we have that 
$\int_0^\infty Z(s)\,ds >0$. Therefore from \eqref{eq.Zintcontra} we get 
\[
\int_0^\infty Z(s)ds > b\int_0^\infty Z(s)\,ds,
\]
and since $b>1$, we arrive again at a contradiction.  
Thus we must have $\Vert G(r_{\cdot})\Vert_{L^2(\mathbb{R}_+)}^2<1$, and the theorem is proven.
\end{proof}

\textbf{Acknowledgements} The authors wish to thank the organisers of the FAATNA meeting, and especially Profs. Conte, Diogo and Messina, who organised the special session on ``Recent Advances in the Analysis and Numerical Solution of Evolutionary Integral Equations'', at which an early version of this work was presented. \\
The authors also wish to acknowledge the careful reading of our manuscript by the anonymous referee. 
\\
EL is supported by Science Foundation Ireland (16/IA/4443).
\\
JA is partially supported by the RSE Saltire Facilitation Network
on Stochastic Differential Equations: Theory, Numerics and Applications (RSE1832). 

\begin{thebibliography}{10}
	
	\bibitem{Als}
	G.~Alsmeyer.
	\newblock {\em Erneuerungstheorie. Analyse stochastischer
		Regenerationsschemata}.
	\newblock Teubner Skripten zur Mathematischen Stochastik. Stuttgart: B. G.
	Teubner, 1991.
	
	\bibitem{app21}
	J.~A.~D. Appleby.
	\newblock Mean square characterisation of a stochastic volterra
	integrodifferential equation with delay.
	\newblock {\em International Journal of Dynamical Systems and Differential
		Equations}, 11(3/4):194--226, 2021.
	
	\bibitem{ACR11}
	J.~A.~D. Appleby, J.~Cheng, and A.~Rodkina.
	\newblock Characterisation of the asymptotic behaviour of scalar linear
	differential equations with respect to a fading stochastic perturbation.
	\newblock {\em Discrete and Continuous Dynamical Systems}, Supplement:79--90,
	2011.
	
	\bibitem{AppFree:03}
	J.~A.~D. Appleby and A.~Freeman.
	\newblock Exponential asymptotic stability of linear it\^o--volterra equations
	with damped stochastic perturbations.
	\newblock {\em Electronic J. Probab.}, 8(22):1--22, 2003.
	
	\bibitem{AL23}
	J.~A.~D. Appleby and E.~Lawless.
	\newblock On the dynamics and asymptotic behaviour of the mean square of scalar
	linear stochastic difference equations.
	\newblock In S.~Elaydi, M.~Kulenovic, and S.~Kalabusic, editors, {\em Advances
		in Discrete Dynamical Systems, Difference Equations and Applications},
	Springer Proceedings in Mathematics and Statistics, pages 25--60, 2023.
	
	\bibitem{AMR}
	J.~A.~D. Appleby, X.~Mao, and M.~Riedle.
	\newblock Geometric brownian motion with delay: Mean square characterisation.
	\newblock volume 137 of {\em Proceedings of the AMS}, pages 339--348, 2009.
	
	\bibitem{bucknot2013}
	E.~Buckwar and G.~Notarangelo.
	\newblock A note on the analysis of asymptotic mean--square stability
	properties for systems of linear stochastic delay differential equation.
	\newblock {\em Discrete and Continuous Dynamical Systems Series B},
	18(6):1521--1531, 2013.
	
	\bibitem{CW89}
	T.~Chan and D.~Williams.
	\newblock An ``excursion'' approach to an annealing problem.
	\newblock {\em Mathematical Proceedings of the Cambridge Philosophical
		Society}, 105:169--176, 1989.
	
	\bibitem{DGLW}
	O.~Diekmann, S.~A. van Gils, S.~M.~Verduyn Lunel, and H.-O. Walther.
	\newblock {\em Delay Equations; Functional, Complex, and Nonlinear Analysis}.
	\newblock Applied Mathematical Sciences. Springer-Verlag, New York, 1995.
	
	\bibitem{GLS}
	G.~Gripenberg, S.-O. Londen, and O.~Staffans.
	\newblock {\em Volterra Integral and Functional Equations}.
	\newblock Encyclopedia of Mathematics and it's Applications. Cambridge
	University Press, 1990.
	
	\bibitem{HL}
	J.~K. Hale and S.~M.~V. Lunel.
	\newblock {\em Introduction to Functional Differential Equations}.
	\newblock Applied Mathematical Sciences. Springer, Berlin, 1993.
	
	\bibitem{Hauss:78}
	U.~G. Haussmann.
	\newblock Asymptotic stability of the linear it\^o equation in infinite
	dimensions.
	\newblock {\em Journal of Mathematical Analysis and Applications}, 65:219--235,
	1978.
	
	\bibitem{Ichi:82}
	A.~Ichikawa.
	\newblock Stability of semilinear stochastic evolution equations.
	\newblock {\em Journal of Mathematical Analysis and Applications}, 90:12--44,
	1982.
	
	\bibitem{KolMys}
	V.~B. Kolmanovskii and A.~Myshkis.
	\newblock {\em Introduction to the Theory and Applications of Functional
		Differential Equations}.
	\newblock Mathematics and its Applications. Kluwer Academic Publishers,
	Dordrecht, 1999.
	
	\bibitem{MacNech}
	M.~C. Mackey and I.~G. Nechaeva.
	\newblock Solution moment stability in stochastic differential delay equation.
	\newblock {\em Physical Review E}, 52(4):3366--–3376, 1995.
	
	\bibitem{Mao:94}
	X.~Mao.
	\newblock {\em Exponential stability of stochastic differential equations}.
	\newblock Pure and Applied Mathematics. Marcel Dekker, New York, 1994.
	
	\bibitem{Mao:1996}
	X.~Mao.
	\newblock Razumikhin-type theorems on exponential stability of stochastic
	stochastic functional-differential equations.
	\newblock {\em Stochastic Processes and Their Applications}, 65(2):233--250,
	1996.
	
	\bibitem{Mao:2000}
	X.~Mao.
	\newblock Stability of stochastic integro--differential equations.
	\newblock {\em Stochastic Analysis and Applications}, 18(6):1005--1017, 2000.
	
	\bibitem{Mao:2001}
	X.~Mao.
	\newblock Almost sure exponential stability of delay equations with damped
	stochastic perturbation.
	\newblock {\em Stochastic Analysis and Applications}, 19(1):67--84, 2001.
	
	\bibitem{MaoLiao:1996}
	X.~Mao and X.~Liao.
	\newblock Almost sure exponential stability of neutral differential difference
	equations with damped stochastic perturbations.
	\newblock {\em Electronic Journal of Probability}, 1(8):1--16, 1996.
	
	\bibitem{MaoShah:1997}
	X.~Mao and A.~Shah.
	\newblock Exponential stability of stochastic delay differential equations.
	\newblock {\em Stochastics and Stochastics Reports}, 60(1):135--153, 1997.
	
	\bibitem{Mao}
	X.~Mao.
	\newblock {\em Stochastic Differential Equations and Applications}.
	\newblock Horwood, Chichester, 2008.
	
	\bibitem{MizTrut}
	V.~J. Mizel and V.~Trutzer.
	\newblock Stochastic heredity equations: Existence and asymptotic stability.
	\newblock {\em Journal of Integral Equations and Applications}, 7:1--72, 1984.
	
	\bibitem{Moh:84}
	S.-E.~A. Mohammed.
	\newblock {\em Stochastic Functional Differential Equations}.
	\newblock volume 99 of Research Notes in Mathematics. Pitman, London, 1984.
	
	\bibitem{RRG}
	M.~Rei\ss, M.~Riedle, and O.~van Gaans.
	\newblock On emery’s inequality and a variation-of-constants formula.
	\newblock {\em Stochastic Analysis and Applications}, 25(2):353–379, 2007.
	
	\bibitem{Rudin2}
	W.~Rudin.
	\newblock {\em Real and Complex Analysis}.
	\newblock McGraw--Hill, New York, 3rd edition, 1986.
	
	\bibitem{Shaikhet2013}
	L.~E. Shaikhet.
	\newblock {\em Lyapunov Functionals and Stability of Stochastic Functional
		Differential Equations}.
	\newblock Springer, Heidelberg, 2013.
	
	\bibitem{SY67a}
	A.~Strauss and J.~A. Yorke.
	\newblock Perturbation theorems for ordinary differential equations.
	\newblock {\em Journal of Differential Equations}, 3:15--30, 1967.
	
	\bibitem{SY67b}
	A.~Strauss and J.~A. Yorke.
	\newblock On asymptotically autonomous differential equations.
	\newblock {\em Mathematical Systems Theory}, 1:175--182, 1967.
	
\end{thebibliography}

\end{document}